\newtheorem{observation}{Observation}
\newcommand{\ComplexityFont}[1]{%
{\ensuremath{\mathsf{#1}}}
}
\newcommand{\NP}{\ComplexityFont{NP}}
\renewcommand{\P}{\ComplexityFont{P}}
\author{S.M.Dhannya\and N.S.Narayanaswamy}
\institute{Department of Computer Science and Engineering, \\ IIT Madras, Chennai, India.}
\begin{document}
\title{Perfect Resolution of Strong Conflict-Free Colouring of Interval Hypergraphs}
\maketitle

\begin{abstract}
The $k$-Strong Conflict-Free ($k$-SCF, in short) colouring problem seeks to find a colouring of the vertices of a hypergraph $H$ using minimum number of colours so that in every hyperedge $e$ of $H$, there are at least $\min\{|e|,k\}$ vertices whose colour is different from that of all other vertices in $e$. In the case of interval hypergraphs, we present an exact $\P$-time algorithm for the $k$-SCF problem thus solving an open problem posed by Cheilaris et al. (2014). We achieve our results by showing that for any hypergraph a $k$-SCF colouring is a proper colouring of a related simple graph which we refer to as a \textit{co-occurrence graph}.  We then show that a co-occurrence graph is obtained by identifying an induced subgraph of a second simple graph that we introduce, which we refer to as  the \textit{conflict graph}.  For interval hypergraphs, we show that each co-occurrence graph and the conflict graph are perfect graphs.  
This property plays a crucial role in our polynomial time algorithm.
Secondly, we show that for an interval hypergraph, the $1$-SCF colouring number is the minimum partition of its intervals into sets such that each set has an exact hitting set (a hitting set in which each interval is hit exactly once).
\end{abstract}

\keywords{Conflict-Free Colouring, Interval Hypergraphs, Perfect Graphs}
% !TEX root = CFCIntervalsDhannyaMain.tex
\section{Introduction}\label{sec:intro}
Let $H = (\mathcal{V},\mathcal{E})$ be a hypergraph and let $\mathbb{N}$ denote the set of non-negative integers.  A colouring function $C: \mathcal{V} \rightarrow \mathbb{N}$ is a \textit{$k$-SCF colouring} of $H$ if for every hyperedge $e \in \mathcal{E}$, there are at least $\min\{|e|, k\}$ non-zero unique colours in $e$. In other words, for each hyperedge $e \in \mathcal{E}$, there exists distinct non-zero colours $c_1,c_2,\cdots,c_{\min\{|e|, k\}}$ such that for $i = 1,2,\ldots,\min\{|e|, k\} $, $|\{v \mid v \in e, C(v) = c_i\}| = 1$. The natural computational problem is to find a $k$-SCF colouring of $H$ using minimum number of colours. We refer to the number of colours used in an optimum $k$-SCF colouring of $H$ as its \textit{$k$-SCF colouring number} and is denoted by $\chi_{cf}^k(H)$.  In the  $k$-SCF colouring problem, the algorithm is presented with an input instance in which all vertices are initially coloured with colour $0$. 
%Given $C : \mathcal{V} \rightarrow \{0, 1, 2, \ldots, c\}$ is a $k$-SCF colouring function of $H$, we refer to colours $\{1, 2, \ldots, c\}$ as the \textit{non-zero} colours or \textit{positive} colours. 
The goal is to modify the colour of some vertices to a non-zero colour such that the resulting colouring is a $k$-SCF colouring. 
%Similar to the convention followed by Cheilaris et al. \cite{CS2012} and Katz et al. \cite{KATZ}, we use colour $0$ to indicate that a vertex with colour $0$ does not CF colour any hyperedge. 
%Cheilaris et al. \cite{CS2012} have noted that
%such a convention has interesting practical applications. 
%while modelling cellular communication networks, a vertex with colour $0$ may be viewed as a base station that is not activated at all and thus does not consume energy. 
The $k$-SCF colouring  problem was first studied by Cheilaris et al. \cite{CPLGARSS2014} and is a generalized variant of a well-studied hypergraph colouring problem known as the \textit{Conflict-Free colouring} problem. A Conflict-Free (CF, in short) colouring is a vertex colouring of a hypergraph such that every hyperedge $e$ has at least one vertex whose colour is different from that of every other vertex in $e$. The minimum number of colours needed to CF colour a hypergraph is called its \textit{CF colouring number} and is denoted by $\chi_{cf}(H)$. Motivated by a frequency assignment problem in cellular networks, the CF colouring problem was introduced by Even, Lotker, Ron and Smorodinsky \cite{ELRS2003}. In mobile communication networks, one must assign frequencies to base stations such that every client that comes under the transmission range of multiple base stations can associate itself to a unique base station without any interference from another base station. The transmission range of various base stations may be viewed as geometric regions such as discs in the 2-dimensional plane and the colours as the frequency of the transmitting tower. CF colouring problem also finds applications in other areas like RFID (Radio Frequency Identification) networks, robotics and computational geometry (see the survey by Somorodinsky \cite{Sm2013}).
%In this article, our focus is on interval hypergraphs. We present a polynomial time exact algorithm for the $k$-SCF colouring problem in interval hypergraphs. We achieve this by observing a natural connection between the $k$-SCF colouring problem and the problem of finding a constrained hitting set of a type of cliques in an associated simple graph. We then formulate this hitting set problem as a linear program (LP) that can be solved in polynomial time, whose fractional solution can be rounded to an integer solution, also in polynomial time.  
%In order to simplify notation, we use $k$ to denote $\min\{|e|, k\}$ whenever we refer to \textit{$k$ vertices in $e$}. 
\subsection*{Past work in CF colouring} \label{sec:CurTechniques}
Since simple graphs are hypergraphs in which each hyperedge has exactly two vertices, it follows that
the CF colouring problem is a generalization of the proper colouring problem on simple graphs.  Thus the CF colouring problem is $\NP$-complete. 
%While there are not many algorithmic results known for the CF colouring problem, 
There have been many bounds on the CF colouring numbers  in geometric hypergraphs and hypergraphs induced by neighbourhoods in simple graphs. The survey due to Smorodinsky \cite{Sm2013}  presents a general framework for CF colouring. This framework involves finding a proper colouring of the hypergraph in every iteration and giving the largest colour class a new colour. Smorodinsky \cite{Sm2013} showed that if for every induced sub-hypergraph $H' \subseteq H$,  the chromatic number of $H'$ is at most $p$, then $\chi_{cf}(H) \leq \log_{1+ \frac{1}{p-1}} n = O(p \log n)$, where $n = |\mathcal{V}|$. Pach and Tardos \cite{PJGT2009} showed that if $|\mathcal{E}(H)| <$ $ \binom{s}{2}$ for some positive integer $s$, and $\Delta$ is the maximum degree of vertices in $H$, then $\chi_{cf}(H)  < s$ and $\chi_{cf}(H) \leq \Delta + 1$. 
%The idea is to order vertices arbitrarily and assign colours to vertices in that order from a set of $\Delta + 1$ colours such that no other vertex in a hyperedge gets the same colour as the first vertex in that hyperedge. It follows that when a vertex of degree $d$ is being assigned a colour, it needs at most $d + 1$ colours which, in turn, is upperbounded by $\Delta + 1$. 
Consequently, it follows that for a hypergraph with $m$ hyperedges, $\chi_{cf}(H) = O(\sqrt{m})$. %The key step in the algorithmic proof of this result is to progressively colour vertices of a certain minimum degree (which reduces in every iteration) with a new colour and remove all edges that contain this vertex. They showed the bound by appropriately fixing the minimum degree as a function of $m$.  
%The CF colouring problem has also been studied on different types of hypergraphs. 
Even et al. \cite{ELRS2003} have studied a number of hypergraphs induced by geometric regions on the plane including discs, axis-parallel rectangles, regular hexagons, and general congruent centrally symmetric convex regions in the plane. Let $\mathcal{D}$ be a set of $n$ finite discs in $\mathbb{R}^2$. 
For a point $p \in \mathbb{R}^2$, define $r(p) = \{D \in \mathcal{D} : p \in D\}$. The hypergraph $(\mathcal{D}, \{r(p)\}_{p \in \mathbb{R}^2})$, denoted by $H(\mathcal{D})$, is called the hypergraph induced by $\mathcal{D}$. Smorodinsky showed that $ \chi_{cf}(H(\mathcal{D})) \leq \log_{4/3} n$ \cite{Sm2007,Sm2013}. Similarly, if $\mathcal{R}$ is a set of $n$ axis-parallel rectangles in the plane, then, $\chi_{cf}(H(\mathcal{R})) = O(\log^2n)$. There have been many studies on hypergraphs induced by neighbourhoods in simple graphs. Given a simple graph $G = (V,E)$, the \textit{open neighbourhood} (or simply neighbourhood) of a vertex $v \in V$ is defined as follows: $N(v) = \{u \in V | (u,v) \in E\}$. The set $N(v) \cup v$ is known as the \textit{closed neighbourhood} of $v$. Pach and Tardos \cite{PJGT2009} have shown that the vertices of a graph $G$ with maximum degree $\Delta$ can be coloured with $O(\log^{2+\epsilon} \Delta)$ colours, so that the closed neighbourhood of every vertex in $G$ is CF coloured. They also showed that if the minimum degree of vertices in $G$ is $\Omega(\log \Delta)$, then the open neighbourhood can be CF coloured with at most $O(\log^2 \Delta)$ colours. Abel et al. \cite{Abel2017} gave the following tight worst-case bound for neighbourhoods in planar graphs: three colours are sometimes necessary and always sufficient. Keller and Smorodinsky \cite{Keller2018} studied CF colourings of intersection graphs of geometric objects. They showed that the intersection graph of $n$ pseudo-discs in the plane admits a CF colouring with $O(\log n)$ colours, with respect to both closed and open neighbourhoods. Ashok et al. \cite{MAXCFC2015} studied an optimization variant of the CF colouring problem named \textsc{Max}-CFC. Given a hypergraph $H = (\mathcal{V},\mathcal{E})$ and integer $r \geq 2$, the \textsc{Max}-CFC problem is to find a maximum-sized subfamily of hyperedges that can be CF coloured with $r$ colours. They gave an exact algorithm running in $O(2^{n+m})$ time. They also studied the problem in the parametrized setting where one must find if there exists a subfamily of at least $d$ hyperedges that can be CF coloured using $r$ colours. They showed that the problem is FPT and gave an algorithm with running time $2^{O(d \log \log d + d \log r)} (n+m)^{O(1)}$. 
%Smorodinsky \cite{Sm2013} proposed a general framework that recursively prunes away the vertices in the largest colour class in a \textit{proper colouring} of the given hypergraph. In a proper colouring of a hypergraph $H$, every hyperedge $E$ with $|E| \geq 2$ has at least two vertices whose colour is different from that of each other. The chromatic number of a hypergraph $H$ is the minimum number of colours needed in an optimal proper colouring of $H$ and it is denoted by $\chi(H)$. The bottleneck in the framework introduced by Smorodinsky \cite{Sm2013} is to efficiently find a proper colouring of the given hypergraph. Using this framework, Smorodinsky \cite{Sm2013} gave bounds for CF colouring number of hypergraphs for which bounds are known for the chromatic number. 
\subsection*{CF colouring in interval hypergraphs} A hypergraph $H_n=([n],\mathcal{I}_n)$, where $[n] = \{1,2, \ldots, n\}$  and $\mathcal{I}_n=\big\{ \{i, i+1, \ldots, ,j\} \mid i \leq j \text{ and } i,j \in [n] \big\}$ is known as a \textit{complete interval hypergraph} \cite{CPLGARSS2014}. It was shown in \cite{ELRS2003} that a complete interval hypergraph can be CF coloured using $\Theta(\log n)$ colours. Chen et al. \cite{CFKLMMPSSWW2006} presented results on an online variant of CF colouring problem in complete interval hypergraphs. In this variant, points arrive online and a point has to be assigned a colour upon its arrival such that the resulting colouring is a CF colouring of the complete interval hypergraph.  Chen et al. \cite{CFKLMMPSSWW2006} gave a greedy algorithm that uses $\Omega(\sqrt n)$ colours, a deterministic algorithm that uses $\Theta(\log^2n)$ colours and a randomized algorithm that uses $O(\log n)$ colours. A hypergraph in which the set of hyperedges is a family of intervals $\mathcal{I} \subseteq \mathcal{I}_n$ is known as an \textit{interval hypergraph}. 
%The CF colouring problem in interval hypergraphs may be viewed as modelling cases when good reception in cellular networks is needed only in a subset of linear sequences of locations.  
One can view the CF colouring problem in interval hypergraphs as modelling the frequency assignment problem in a chain of unit discs \cite{CPLGARSS2014,ELRS2003}. Such a chain of unit discs may be viewed as corresponding to transmission ranges of base stations on approximately unidimensional networks like national highways or railway networks \cite{CPLGARSS2014}. Cheilaris et al. \cite{CPLGARSS2014} have presented as example the case of channel assignment for broadcasting in a wireless mesh network. During some steps of broadcasting, it may so happen that sparse receivers of the broadcast message are inside the transmission range of a linear sequence of transmitters. In this case, only a subset of linear sequences of discs representing the transmitters are involved \cite{Nguyen2011,Zeng2010}.
\subsection*{$k$-SCF colouring in interval hypergraphs} Katz et al. \cite{KATZ} gave a $\P$-time approximation algorithm for $1$-SCF colouring an interval hypergraph with an approximation ratio 4. Cheilaris et al. \cite{CPLGARSS2014} improved the approximation ratio to 2 in their paper on $k$-SCF colouring.  In the case of interval hypergraphs, the techniques by Katz et al. \cite{KATZ}, Cheilaris et al. \cite{CPLGARSS2014} and Cheilaris and Smorodinsky \cite{CS2012} relied on an understanding of combinatorial structures that result in high $1$-SCF colouring number. 
%One such structure is containment of intervals which we refer to as  \textit{nested} intervals. 
For instance, if an interval has two disjoint intervals completely contained inside it, then it is easy to see that this structure needs at least two  colours in an optimal $1$-SCF colouring. The authors in \cite{CS2012} refer to a generalization of this  structure as $\mathcal{J}_{\rho}$ configuration, which is defined as follows. Family $\mathcal{J}_1$ contains all singleton sets of intervals. For $\rho > 1$, a set of intervals $\mathcal{I}$ is in family $\mathcal{J}_{\rho}$ if and only if it can be expressed as a union $\mathcal{I} = L \cup R \cup \iota$, where both $L, R \in \mathcal{J}_{\rho-1}$, no interval from $L$ has a common point with an interval from $R$, and interval $\iota$ includes every interval in $L$ and every interval in $R$. Cheilaris et al. \cite{CPLGARSS2014} and Cheilaris and Smorodinsky \cite{CS2012} showed that any set of intervals that contains a $\mathcal{J}_{\rho}$ configuration as a subset uses at least $\rho$ colors in any $1$-SCF colouring.  They gave a 2-approximation algorithm based on the value of $\rho$, the number of levels of containment, as the lower bound. 
%The number of levels of containment is denoted by the subscript $\rho$ of $\mathcal{J}_{\rho}$. 
In this research, we present a new lower bound for general hypergraphs and show that this lower bound is indeed tight in the case of interval hypergraphs.
%In this direction, we first show that identifying specific substructures as given above is insufficient to obtain a better bound for the colouring number.
We present the key definitions required to state our results and a summary of our results in the next section. 

\section{Motivation and Our Results}
We observe that, for each $\rho > 1$, there are interval hypergraphs for which $\chi^1_{cf}$ is more than $\rho$ and which do not contain a $\mathcal{J}_{\rho}$ configuration.
For instance, it can be easily verified that the examples given in Figure \ref{fig:forbidEx} need at least two colours but they do not contain a $\mathcal{J}_{2}$ configuration. 

\begin{figure}[h]
\centering
\begin{tikzpicture}[scale=0.6]
\draw(0,2)--(5.5,2) (1,2.3) node{$I_1$}; 
\draw(2,2.4)--(8,2.4) (5.5,2.7) node{$I_2$};
\draw(0,1)--(3,1) (2,0.7) node{$I_3$};
\draw(2.75,1.3)--(5,1.3) (3.75,0.9) node{$I_4$}; 
\draw(4.5,1)--(8,1) (6.25,0.7) node{$I_5$}; 
\node at (10,1){};
\node at (4,-1) {\small{(a) Type 1}};
\end{tikzpicture}
\begin{tikzpicture}[scale=0.6]
\draw(0,2)--(4.5,2) (2.5,2.3) node{$I_1$}; 
\draw(3.5,2.25)--(8,2.25) (5.5,2.55) node{$I_2$};
\draw(0,1)--(2,1) (1,0.6) node{$I_3$}; 
\draw(2.5,1)--(5.5,1) (3.5,0.7) node{$I_4$}; 
\draw(6,1)--(7.9,1) (7.25,0.6) node{$I_5$}; 
\node at (4,-1) {\small{(b) Type 2}};
\end{tikzpicture}\\
\caption{Configurations that need at least two colours}
\label{fig:forbidEx}
\end{figure}%
Combinations of these structures also lead to a large $\chi^1_{cf}$.
For instance, consider the set of intervals, as shown in  Figure \ref{fig:forbidFused}, which is obtained by `fusing' two sets of intervals of Type 1 shown in Figure \ref{fig:forbidEx}. We start with two sets $\mathcal{I}_1 = \{I_1,I_2,I_3,I_4,I_5\}$ and $\mathcal{L}_1 = \{L_1,L_2,L_3,L_4,L_5\}$. Extend $I_5$ to the right such that the right endpoint of $I_5$ is one point to the right of right endpoint of $I_2$. Note that our intervals are all finite sets of consecutive integers.  Then, fuse $\mathcal{L}_1$ to $\mathcal{I}_1$ as follows. Let $I_5$ and $L_3$ fuse to become a single interval $I_5L_3$, while ensuring that $I_2 \cap L_1 \neq \emptyset$. This new structure also needs two colours even though it does not contain Type 1 structure, Type 2 structure or a $\mathcal{J}_{2}$ configuration as a substructure.

\begin{figure}[h]
\centering
\begin{tikzpicture}[scale=0.7, every node/.style={scale=0.7}]
\draw(0,2)--(4.5,2) (1,2.2) node{$I_1$}; 
\draw(2,2.4)--(6,2.4) (3.5,2.6) node{$I_2$};
\draw(0,1)--(2.5,1) (1.5,1.2) node{$I_3$};
\draw(2.25,1.3)--(4,1.3) (3,1.5) node{$I_4$}; 
\draw(5,2.8)--(9,2.8) (6.5,3) node{$L_1$};
\draw(7,3.2)--(11,3.2) (9.5,3.4) node{$L_2$};
\draw(3.75,1)--(7.5,1) (5.25,1.2) node{$I_5L_3$}; 
\draw(7.25,1.3)--(8.75,1.3) (7.9,1.5) node{$L_4$}; 
\draw(8.5,1)--(11,1) (9.25,1.2) node{$L_5$};
\end{tikzpicture}
\caption{`Fusion' of two sets of intervals in Figure \ref{fig:forbidEx} (a)}
\label{fig:forbidFused}
\end{figure}%

Thus we conclude that combinations of structures like Type 1, Type 2  and $\mathcal{J}_{\rho}$ configuration result in an infinite family of structures that can be used to construct interval hypergraphs with arbitrarily high $\chi^1_{cf}$.  
 Hence we believe that a complete characterization or even strengthening the lower bound based on such substructures might be inherently complex. On the other hand, the lower bound that we present turns out to be valuable for interval hypergraphs.
 
%  In this article, we present a new lower bound for $k$-SCF colouring number of hypergraphs using two types of auxiliary simple graphs constructed from the given hypergraph. \\
 
\subsection*{Main Result}
The new lower bound that we present is indeed a tight bound for interval hypergraphs and it leads to our main result which is an optimal $\P$-time algorithm for the $k$-SCF colouring in interval hypergraphs. This algorithm indeed solves an open problem posed by Cheilaris et al. \cite{CPLGARSS2014}.
\begin{itemize}
\item[$\blacksquare$] The $k$-SCF colouring problem in interval hypergraphs can be solved in polynomial time. (Theorem \ref{thm:kCFCIntHypPolyTime})
\end{itemize}
%We formally define the two types of auxiliary graphs and also present the associated terminology to enable the reader to better appreciate the results outlined in this section.   
We observe that a $k$-SCF colouring of a hypergraph can be naturally seen as the proper colouring of a related simple graph known as a {\em co-occurrence graph} which we define below.  A co-occurrence graph is defined for a $k$-SCF colouring and requires another abstraction that we refer to as a \textit{$k$-representative function}.

\noindent
For a hypergraph $H = (\mathcal{V},\mathcal{E})$, let $[\mathcal{V}]^k$ denote the set $\{X \mid (X \subseteq \mathcal{V}) \wedge (|X| \leq k)\}$. 
\begin{definition}[$k$-representative function]
 Given a $k$-SCF colouring function $C$ of $H$,
a function $t:\mathcal{E} \rightarrow [\mathcal{V}]^k$ is a $k$-representative function if for each $e \in \mathcal{E}$, $t(e)$ is a set of $\min\{|e|,k\}$ vertices in $e$ such that the colour given to any vertex in $t(e)$ by $C$ is not given to any other vertex in $e$.  We say that hyperedge $e$ is $k$-SCF coloured by the vertices in $t(e)$. 

\end{definition}

Next, we observe that any function $t:\mathcal{E} \rightarrow [\mathcal{V}]^k$ such that for each edge $e$, $t(e) \subseteq e$ and $|t(e)| =  \min\{|e|,k\}$, is a $k$-representative function obtained from some $k$-SCF colouring of $H$.  In particular, the $k$-SCF colouring is a proper colouring of the graph $\Gamma_{t}(H)$ called a co-occurrence graph, which is defined as follows.
\begin{definition}[Co-occurrence graph]
%Given a hypergraph $H = (\mathcal{V},\mathcal{E})$ and a positive integer $k$, 
Given a function  $t:\mathcal{E} \rightarrow [\mathcal{V}]^k$ such that for each edge $e$, $t(e) \subseteq e$ and $|t(e)| =  \min\{|e|,k\}$, the co-occurrence graph of hypergraph $H$, denoted by $\Gamma_{t}(H)$, is defined as follows:
The vertex set of $\Gamma_{t}(H)$ is $R = \bigcup_{e \in \mathcal{E}} t(e)$. 
%, which appears as the subscript $t$ of $\Gamma_{t}$. 
%Let $R = \bigcup_{e \in \mathcal{E}} t(e)$ be the set of all representatives. The vertex set of the co-occurrence graph $\Gamma_{t}(H)$ is  $R$, and 
For $u,v \in R$,  $uv$ is an edge in  $\Gamma_{t}(H)$ if and only if  for some $e \in \mathcal{E}$, $\{u,v\} \cap t(e) \neq \emptyset$ and $u \in e$ and $v \in e$. 

\end{definition}
Clearly, a proper colouring of the graph $\Gamma_{t}(H)$ can be extended to a  $k$-SCF colouring of $H$ in which 
% for which $t$ is a $k$-representative function obtained from the $k$-SCF colouring.   
%In this $k$-SCF colouring, 
the vertices of $H$ which are not present in $\Gamma_{t}(H)$ get the $0$ colour.
Each element in $t(e)$ is known as a representative of $e$.  Wherever $H$ is implied, we use  $\Gamma_{t}$ to denote $\Gamma_{t}(H)$. 
The following result  gives the relationship between the $k$-SCF colouring number of a hypergraph and the chromatic number of its co-occurrence graphs.  
\begin{itemize}
\item[$\blacksquare$] The $k$-SCF colouring number of a hypergraph $H$ is equal to the chromatic number of a co-occurrence graph that has minimum chromatic number over all possible co-occurrence graphs of $H$. (Theorem \ref{thm:kCo-occChar}) 
\end{itemize}
%
%\begin{restatable}{theorem}{TheoremkCoOccChar}
%\label{thm:kCo-occChar}
%Let $H = (\mathcal{V},\mathcal{E})$ be a hypergraph and $k$ be any positive integer.  Let $\chi_k^*(H)$ be the number of non-zero colours used in any optimal $k$-SCF colouring of $H$. Let $\chi_{min(k)}(H)$ be the minimum chromatic number over all possible co-occurrence graphs of $H$. Then, $\chi_k^*(H) = \chi_{min(k)}(H)$.
%\end{restatable}
%Since a co-occurrence graph is constructed from a $k$-representative function, effectively, 
As a consequence of Theorem \ref{thm:kCo-occChar} we have reduced the problem of finding an optimal $k$-SCF colouring to the problem of finding an optimal $k$-representative function. To find an optimal $k$-representative function we introduce a  simple graph associated with $H$, which we refer to as the \textit{conflict graph}.
\begin{definition}[Conflict graph] \label{defn:ConflictGraph}
The conflict graph of $H$, denoted by $\hat{G}(H)$, has the vertex set $V = \big\{(e,v) \mid e \in \mathcal{E}, v \in e\big\}$ and the edge set $E = E_{edge} \cup E_{colour}$, where $E_{edge}$ and $E_{colour}$ are defined as follows:
\begin{itemize}
\item $E_{edge} = \bigg\{ \big\{(e,v),(e,u)\big\} \mid \{v,u\} \subseteq e, u \neq v \bigg\}$. 
\item $E_{colour} = \bigg\{ \big\{(e,v),(g,u)\big\} \mid \{v,u\} \subseteq e \text{ or } \{v,u\} \subseteq g, u \neq v, e\neq g \bigg\}$. 
\end{itemize} 

\end{definition}
The elements of $V(\hat{G}(H))$ and $\mathcal{V}(H)$ are referred to as {\em nodes} and \textit{vertices}, respectively. A node $(e,v)$ encodes the proposition that $v$ is a representative of $e$.  In a node $(e,v)$,  we refer to $e$ as the hyperedge coordinate and $v$ as the vertex coordinate. Wherever $H$ is implied, we use $\hat{G}$ to denote $\hat{G}(H)$. The following structural properties of the conflict graphs are important in our results.
%in the proof of Lemma \ref{lem:kSPAlgPoly}.
\begin{observation} \label{obs:NodesOfaVertexAreIndep}
Let $H = (\mathcal{V},\mathcal{E})$ be a hypergraph.
\begin{itemize}
%\item For each vertex $v \in \mathcal{V}$, the set of nodes $\{(e,v) \mid e \in \mathcal{E}, v \in e\}$ in $\hat{G}$ forms an independent set.
\item For each vertex $v \in \mathcal{V}$, the nodes in $\hat{G}$ with $v$ as the vertex coordinate form an independent set.
\item For each hyperedge $e$ in $\mathcal{E}$, the nodes in $\hat{G}$ with $e$ as the hyperedge coordinate form a clique. 
\end{itemize}     
\end{observation}
\noindent
%The constraints to be satisfied by the exact-$k$-hitting set depend on the cliques of the conflict graph that we define below.
We define two special types of cliques in the conflict graph which are crucial to our results. 
\begin{definition}[Hyperedge Cliques and Colour Cliques]\label{defn:ConfGrph2TypesCliques}
\\
$\square$ Hyperedge Clique: A clique in a conflict graph formed by nodes having the same hyperedge coordinate. The set of hyperedge cliques in a conflict graph is denoted by $\mathcal{Q}_1$.\\
$\square$ Colour Clique: A maximal clique in a conflict graph that has at least one edge from $E_{colour}$. The set of colour cliques in a conflict graph is denoted by $\mathcal{Q}_2$.

\end{definition}
%Throughout this article, we use the term constrained exact-$k$-hitting set to denote an exact-$k$-hitting set $S$ that places a bound on the number of times the colour cliques are hit by $S$. 
The conflict graph plays a crucial role in formulating the problem of finding an optimal $k$-representative function. For this, we require the concept of an \textit{exact-$k$-hitting set}, which is defined as follows.

\begin{definition}[Exact-$k$-hitting set] \label{defn:exactKHittingSet}
An exact-$k$-hitting set of a hypergraph $X = (U,\mathcal{S})$ is a set $U' \subseteq U$ such that for each hyperedge $s \in \mathcal{S}$, $|U' \cap s| = \min\{|s|, k\}$.

\end{definition} 
The optimal $k$-representative function is obtained from an exact-$k$-hitting set $S \subseteq V(\hat{G})$ of the hyperedge cliques in $\hat{G}$ such that the chromatic number of the subgraph of $\hat{G}$ induced by $S$ is minimized.  This gives a  new lower bound for the number of colours needed in a $k$-SCF colouring. 
\begin{itemize}
\item[$\blacksquare$] The $k$-SCF colouring number of a hypergraph $H$ is at least as large as the minimum chromatic number of the subgraph induced by an exact-$k$-hitting set of hyperedge cliques of the conflict graph.
 %obtained from an optimal $k$-representative function of $H$. 
 (Theorem \ref{thm:Co-OccConflRelation})
\end{itemize}
%maximum intersection with a colour clique is minimized.
In the case of interval hypergraphs, we prove that this lower bound is tight, due to the fact that conflict graphs of interval hypergraphs are perfect. For interval hypergraphs, given such an exact-$k$-hitting set, one can find a $k$-SCF colouring from a co-occurrence graph in polynomial time due to two important facts: chromatic number of a perfect graph can be found in $\P$-time and co-occurrence graphs of interval hypergraphs are perfect.
\begin{itemize}
\item[$\blacksquare$] Conflict graphs and co-occurrence graphs of interval hypergraphs are perfect. (Theorem \ref{thm:ConflictPerf}, Theorem \ref{thm:Co-occPerf})
\end{itemize}
%Since graph chromatic number is $\P$-time solvable in perfect graphs, the perfectness plays crucial role in our polynomial time algorithm for interval hypergraphs.
%obtaining a polynomial time solution
%\begin{restatable}{theorem}{TheoremConflPerf}\label{thm:ConflictPerf}
%Let $H = (\mathcal{V},\mathcal{I})$ be an interval hypergraph. Then the conflict graph $\hat{G}(H)$ is perfect.
%\end{restatable}
%An optimum fractional solution from the LP is rounded  to an integer feasible solution which gives a representative function for the given interval hypergraph. We show that the minimum colouring of the resulting optimal co-occurrence graph can be computed in polynomial time due to perfectness of co-occurrence graphs of interval hypergraphs.
%
%\noindent
%By Theorem \ref{thm:kCo-occChar}, obtaining a proper colouring of an optimal co-occurrence graph is equivalent to obtaining an optimal $k$-SCF colouring of the given hypergraph. 
%A crucial step in $\P$-time execution of our algorithm is solving the LP in polynomial time. First, we solve the LP using standard ellipsoid method by using  a polynomial time separation oracle \cite{VaziraniApproxAlgo} which we design.  The separation oracle uses the perfectness property of conflict graphs. 
Finally, in interval hypergraphs, we study the relationship between $1$-SCF colouring problem and the partition of given set of intervals into parts each of which has an exact hitting set.  It is easy to see that for a hypergraph $H = (\mathcal{V},\mathcal{E})$, a $1$-SCF colouring of $H$ using at most $\ell$ non-zero colours partitions $\mathcal{E}(H)$ into $\ell$ hypergraphs such that each hypergraph has an exact hitting set. Interestingly, when $H$ is an interval hypergraph which can be partitioned into $\ell$ interval hypergraphs, each of which has an exact hitting set, then $H$ can be $1$-SCF coloured with at most $\ell$ non-zero colours. This is our characterization in Theorem \ref{thm:EHS-CF}.
\begin{itemize}
\item[$\blacksquare$] For an interval hypergraph $H$, the $1$-SCF colouring number is equal to the minimum number of parts in a partition of $H$ into interval hypergraphs each of which has an exact hitting set. (Theorem \ref{thm:EHS-CF})
\end{itemize}

\subsection{Preliminaries}\label{sec:Prelims}
In an interval $I = \{i,i+1,\ldots,j\}$, $i$ and $j$ are the \textit{left} and \textit{right endpoints} of $I$ respectively, denoted by $l(I)$ and $r(I)$, respectively. Since an interval is a finite set of consecutive integers, it follows that $|I|$ is well-defined. Throughout the paper, hypergraph $H$ is assumed to have $n$ vertices and $m$ hyperedges.

For a set $S$ of vertices in a simple graph $G$, $G[S]$ denotes the \textit{induced subgraph} of $G$ on $S$. \\
%\textbf{Perfect Graphs. }\cite{Gol2004}
\noindent
Perfect graphs \cite{Gol2004} are very well-studied 
%class of simple graphs with many interesting properties. 
and many hard problems are tractable on perfect graphs. We use four well known properties of perfect graphs. 
\begin{enumerate}
\item[P1] Let $G = (V,E)$ be a perfect graph. For a given subset $V' \subseteq V$, let $G[V'] = (V',E_{V'}) $ be the subgraph induced by $V'$, where $E_{V'} = \{uv \in E \mid u,v \in V'\}$. Then, it is known from  \cite{Gol2004} that $ \omega(G[V']) = \chi(G[V'])$, where $\omega(G[V'])$ and  $\chi(G[V'])$ are, respectively, the clique number and the chromatic number of $G[V']$. Recall that the clique number of a simple graph $G$ is the size of its largest clique and the chromatic number of $G$ is the number of colours needed in an optimal proper colouring of $G$.
\item[P2] A \textit{Berge graph} is a simple graph that has neither an \textit{odd hole} nor an \textit{odd anti-hole} as an induced subgraph \cite{Berge1985,Chudnovsky2005,Chudnovsky2006,Gol2004}. An odd hole is an induced cycle of odd length that has at least 5 vertices and an odd anti-hole is the complement of an odd hole. It is known from Theorem 1.2 in \cite{Chudnovsky2006} that a graph is perfect if and only if it is Berge. 
\item[P3] The chromatic number of a perfect graph can be found in polynomial time  \cite{GROTSCHEL1984325}.
\item[P4] The maximum weighted clique problem can be solved in polynomial time in perfect graphs \cite{Grotschel1981},\cite{GROTSCHEL1984325}. 
\end{enumerate} 

\textbf{Separation Oracle:} A separation oracle is a polyomial time algorithm that given a point in $\mathbb{R}^d$, where $d$ is the number of variables in a linear program relaxation, either confirms that this point is a feasible solution, or produces a violated constraint (See Section 12.3.1 in\cite{VaziraniApproxAlgo}).  \\
We use $deg(v)$ to denote the degree of a vertex $v$.\\
All other definitions and notations used in this paper are from West \cite{West} and Smorodinsky \cite{Sm2013}.

% !TEX root = CFCIntervalsDhannyaMain.tex
\section{\texorpdfstring{$k$}{k}-Strong Conflict-Free Colouring}  \label{sec:k-SCF}
In Section \ref{subsec:Co-OccConflict}, we present a characterization of the $k$-SCF colouring number in terms of the minimum chromatic number of some co-occurrence graph associated with the given hypergraph. In Section \ref{subsec:Co-occConflictPerfect}, we prove important structural properties of a co-occurrence graph and the conflict graph when the underlying hypergraph is an interval hypergraph. %These properties are crucial to the algorithm presented in Section \ref{sec:AlgoForK-SCF} that solves the $k$-SCF colouring problem optimally in polynomial time for interval hypergraphs. 
\subsection{Co-occurrence Graphs and Conflict Graphs} \label{subsec:Co-OccConflict}
%While the co-occurrence graph encodes the co-occurrence of representatives of a hyperedge, the conflict graph encodes the constraints of $k$-SCF colouring. The conflict graph of a hypergraph $H$ is unique whereas there may be many co-occurrence graphs for $H$ based on choice of $k$-representative function. 
We first present the relationship between the $k$-SCF colouring number of a hypergraph and the chromatic number of its co-occurrence graphs. 
Recall that $\chi_{cf}^k(H)$ is the number of non-zero colours used in any optimal $k$-SCF colouring of $H$.   Define $\chi_{min}^k(H) = \min\limits_{t'} \chi(\Gamma_{t'})$ where $\chi(\Gamma_{t'})$ is the chromatic number of the co-occurrence graph $\Gamma_{t'}$ and the minimum is taken over all $k$-representative functions $t'$ of the hypergraph $H$.  
%\TheoremkCoOccChar*
\begin{theorem} \label{thm:kCo-occChar}
For a hypergraph $H = (\mathcal{V},\mathcal{E})$ and a positive integer $1 \leq k \leq n$,  $\chi_{cf}^k(H) = \chi_{min}^k(H)$.

\end{theorem}
\begin{proof}
Let $t$ be a $k$-representative function such that $\chi_{min}^k(H)$ $= \chi(\Gamma_{t})$. Extend a proper colouring $C$ of $\Gamma_{t}$ to a vertex colouring function $C'$ of $\mathcal{V}(H)$ by assigning the colour $0$ to those vertices in $\mathcal{V}(H) \setminus R$, where $R$ is the set of representatives defined by $t$. $C'$ is a $k$-SCF colouring of $H$, that is,  for each $e \in \mathcal{E}$, the colour assigned to each vertex in $t(e)$ by $C'$ is different from the colour assigned to every other vertex in $e$. The reason for this is as follows. Let $v$ be a vertex in $e$. If $C'(v)=0$, then for each vertex $u \in t(e)$, $C'(v) \neq C'(u)$. If $C'(v)$ is non-zero and $v \notin t(e)$, then it implies that there is an $e'$ such that $v \in t(e')$. Consequently, $v \in V(\Gamma_{t})$. Since $v \in e$, there is an edge from all vertices in $t(e)$ to $v$ by definition of $\Gamma_{t}$. Similarly, if $C'(v)$ is non-zero and $v \in t(e)$, then there is an edge from $v$ to every other vertex in $t(e) \setminus v$. Further, since $C'$ is obtained from a proper colouring $C$ of $\Gamma_{t}$ it follows that $C'(v)$ is different from colour assigned to each vertex in $t(e)$ by $C'$. Thus $\chi_{cf}^k(H) \leq \chi_{min}^k(H)$. Now, we prove that $\chi_{min}^k(H) \leq \chi_{cf}^k(H)$ as follows: since a minimum $k$-SCF colouring of $H$ gives a natural $k$-representative function $t$, it follows that $\chi_{cf}^k(H) = \chi(\Gamma_{t}) \geq \chi_{min}^k(H)$.  Therefore, it follows that $\chi_{cf}^k(H) = \chi_{min}^k(H)$.  
\qed
\end{proof}

\noindent
In the following lemma, we present a relationship between the clique number (chromatic number) of a co-occurrence graph and the clique number (chromatic number) of an induced subgraph of the conflict graph.
%We now give the relationship between clique sizes of co-occurrence graphs and an induced subgraph of the conflict graph.
\begin{theorem}\label{thm:Co-OccConflRelation}
Let $H = (\mathcal{V},\mathcal{E})$ be a hypergraph. Let $t$ be a $k$-representative function of $H$ and let $\hat{G}$ be the conflict graph of $H$. Then, the set $S = \{(e,u) \mid (e,u) \in \hat{G}, u \in t(e)\}$ is an exact-$k$-hitting set of hyperedge cliques, $\omega(\Gamma_{t}) \geq \omega(\hat{G}[S])$, and $\chi(\Gamma_{t}) \geq \chi(\hat{G}[S])$.  

\end{theorem}

%\TheoremCoOccConflRelation*
\begin{proof}
%First, we show that the set $S = \{(e,u) \mid (e,u) \in \hat{G}, u \in t(e)\}$ is an exact-$k$-hitting set of hyperedge cliques.  
By our premise, $t$ is a $k$-representative function and hence the set $S$, obtained from $t$ as defined above, hits every hyperedge clique exactly $k$ times. It follows from Definition \ref{defn:exactKHittingSet} that $S$ is indeed an exact-$k$-hitting set of the set of hyperedge cliques. Now we show that $\omega(\Gamma_{t}) \geq \omega(\hat{G}[S])$. 
Let $\big\{(e,u),(f,v)\big\}$ be an edge in $\hat{G}[S]$. By definition of set $S$, $u \in t(e)$ and $v \in t(f)$. We consider two cases:\\
Case 1 - when $e = f$: In this case,  $u, v \in e$ and the edge $\big\{(e,u),(f,v)\big\}$ belongs to $E_{edge}$ of $\hat{G}$. Both $u$ and $v$ are representatives of $e$. Hence, by definition of a co-occurrence graph, $uv$ is an edge in $\Gamma_{t}$. \\
Case 2 - when $e \neq f$: In this case, the only possibility is that the edge $\big\{(e,u),(f,v)\big\}$ belongs to $E_{colour}$ of $\hat{G}$. It follows that $u$ and $v$ are both present together in either $e$ or $f$. Without loss of generality, let $u,v \in e$. Since $(e,u) \in \hat{G}[S]$, it follows from the construction of $S$ that $u$ belongs to $t(e)$. Hence, $uv$ is an edge in $\Gamma_{t}$.\\
Therefore, for every edge $\big\{(e,u),(f,v)\big\}$ in $\hat{G}[S]$, there exists an edge $uv$ in $\Gamma_{t}$. It follows that for every clique in $\hat{G}[S]$, there exists a clique of same size in $\Gamma_{t}$. Hence, $\omega(\Gamma_{t}) \geq \omega(\hat{G}[S])$.
Further, given a proper colouring of $\Gamma_{t}$,  let the colour given to the node $(e,u)$ be the colour given to vertex $u$ in the proper colouring of $\Gamma_{t}$. From Observation \ref{obs:NodesOfaVertexAreIndep}, there are no edges between two nodes with the same vertex coordinate. Further, for each edge $\big\{(e,u),(f,v)\big\}$ in  $\hat{G}[S]$, the edge $uv$ is in $\Gamma_{t}$, and hence it follows that the colouring of $\hat{G}[S]$ is a proper colouring.   Thus $\chi(\Gamma_{t}) \geq \chi(\hat{G}[S])$.  Hence the theorem.
\qed
\end{proof}
As a consequence of Theorem \ref{thm:kCo-occChar} and Theorem \ref{thm:Co-OccConflRelation}, it follows that the $k$-SCF colouring number is at least the minimum chromatic number of an induced subgraph formed by an exact-$k$-hitting set of the hyperedge cliques of $\hat{G}$.

\subsection{For interval hypergraphs, the conflict graph and each co-occurrence graph is a perfect graph} \label{subsec:Co-occConflictPerfect}
First, we show that the conflict graph of an interval hypergraph is a perfect graph. In this proof, $\mu(H)$ denotes the number of vertices in $\hat{G}$. Note that $\mu(H) = \sum_{I \in \mathcal{I}} |I|$. 
\begin{theorem}\label{thm:ConflictPerf}
%Let $H = (\mathcal{V},\mathcal{I})$ be an interval hypergraph. Then the conflict graph $\hat{G}(H)$ is perfect.
The conflict graph of an interval hypergraph is a perfect graph.

\end{theorem}
%\TheoremConflPerf*
\begin{proof}
By property P2 of perfect graphs given in Section \ref{sec:Prelims}, we know that for each $p > 1$, induced  odd cycle $C_{2p+1}$ and its complement denoted by $\overline{C_{2p+1}}$ are forbidden induced subgraphs for perfect graphs.  
%We now show that for an interval hypergraph, the graph $\hat{G}$ is perfect.    
Our proof of perfectness of $\hat{G}$ is by starting with the hypothesis that the claim is false and then deriving a contradiction.
Let $H = (\mathcal{V},\mathcal{J})$ be an interval hypergraph  for which $\hat{G}$ is not perfect, and among all such interval hypergraphs, $H$ minimizes $\mu(H)$. Since $\hat{G}$ is not perfect, let us consider a minimal induced subgraph of $\hat{G}$, denoted by, say $F$ for which $\omega(F) \neq \chi(F)$.   We claim that for every interval $I \in \mathcal{J}$ such that $|I| > 1$, both the nodes $(I,l(I))$ and $(I,r(I))$ belong to $F$. The proof of this claim is by contradiction to the fact that $H$ is an interval hypergraph  that minimizes $\mu(H)$ and for which $\hat{G}$ is not perfect.  Let $I$ be an interval in $\mathcal{J}$ such that $|I| > 1$ and the node $(I,r(I)) \notin V(F)$. Consider the hypergraph $H' = (\mathcal{V},\mathcal{J}')$ where  $\mathcal{J}' = (\mathcal{J} \setminus I) \cup (I \setminus r(I))$.  Let $\hat{G}'$ denote the conflict graph of $H'$. Observe that $V(\hat{G}') = V(\hat{G}) \setminus \{(I, r(I))\}$. Since $(I,r(I)) \notin V(F)$ and $(I,r(I)) \notin V(\hat{G}')$, it follows that $F$ is an induced subgraph of $\hat{G}'$ also.  Hence it follows that $\hat{G}'$ is imperfect.  Further, $\mu(H') < \mu(H)$.    This contradicts the hypothesis that $H$ is the interval hypergraph  with minimum $\mu(H)$ for which $\hat{G}$ is imperfect.  Therefore, it follows that for each interval $I \in \mathcal{J}$, $(I,r(I))$ is a node in $F$. An identical argument shows that for each interval $I \in \mathcal{J}$, $(I,l(I))$ is also a node in $F$. Hence it follows that $\forall I \in \mathcal{J}$ such that $|I| > 1$, both the nodes $(I,l(I))$ and $(I,r(I))$ belong to $F$.
We now consider two exhaustive cases to obtain a contradiction to the known structure of $F$ which we know is either a $C_{2p+1}$ or a $\overline{C_{2p+1}}$ for some $p > 1$. \\
{\em Case 1- When $F$ is an induced odd cycle $C_{j}, j \geq 5$}:   In the following proof, we consider different cases, and in each case we conclude that three nodes of $C_j$ form a $K_3$ in $\hat{G}$.  This is a contradiction to the fact  that induced cycles of length at least 4 do not have a $K_3$, and we refer to this as {\em a contradiction} in the proof below.  \\
 We know that all the intervals $I$ such that $|I| > 1$ have both the nodes $(I,l(I))$ and $(I,r(I))$ in $F$.  Let $(I'',q)$ be a node such that  for some $I'$, $q$ is in interval $I'$ and $q$ is different from $r(I')$ and $l(I')$. Then, from the definition of $E_{edge}$ and $E_{colour}$, it follows that the 3 nodes $(I',l(I')), (I',r(I')), (I'',q)$ form a $K_3$, a contradiction.  As a consequence of this observation, it also follows that for any two nodes $(I_1,q_1)$ and $(I_2,q_2)$ in $C_j$ for which $|I_1| > 1$ and $|I_2| > 1$,  $l(I_1)$ and $l(I_2)$ are different, and  $r(I_1)$ and $r(I_2)$ are different.  
Therefore, for each node $(I,q)$ in $C_j$, $q$ is either $l(I)$ or $r(I)$ or $|I|=1$, and $q$ is the left endpoint (or right endpoint) of at most one interval, and for each interval $I'$, $q$ is not an element of $I' \setminus \{l(I'),r(I')\}$ (we call this set as the strict interior of $I'$).  \\
 From the conclusions above, the intervals of length more than 1 contribute  an even number of distinct nodes to the cycle $C_j$.   Since $C_j$ is an induced odd cycle, it follows that in $C_j$ there is at least one more node $(I'',q)$ for which $|I''| = 1$. It follows that $I''$ contains only the point $q$. 
Let $(I_1,q_1)$ and $(I_2,q_2)$ be the two neighbours of $(I'',q)$ in $C_j$. From Observation \ref{obs:NodesOfaVertexAreIndep}, it follows that $q$ is different from $q_1$ and $q_2$.  From the analysis above, it follows that $q$ and $q_1$ are endpoints of $I_1$, and $q$ and $q_2$ are endpoints of $I_2$.  Again from the conclusions above, since $l(I_1)$ and $l(I_2)$ are different, and since $r(I_1)$ and $r(I_2)$ are different,
without loss of generality, let us consider $q = l(I_1) = r(I_2)$ and $l(I_2) = q_2 < q < q_1 = r(I_1)$.  Therefore, the three nodes $(I_1, r(I_1)), (I'',q), (I_2,l(I_2))$ form a path in $F$.  We know that $(I_1,l(I_1))$ and $(I_2,r(I_2))$ are also vertices in $C_j$ which is an induced (that is, chordless) cycle.   Therefore, $(I_1,l(I_1)), (I_1, r(I_1)), (I'',q), (I_2,l(I_2)), (I_2,r(I_2))$ is a path in $C_j$.  In other words, $(I_1,q), (I_1, q_1), (I'',q), (I_2,q_2), (I_2,q)$ is an induced path of length 5 in $C_j$, since $(I_1,q)$ and $(I_2,q)$ are not adjacent, by Observation \ref{obs:NodesOfaVertexAreIndep}.  Since $C_j$ is a cycle, it has at least one another node, say $(I_3, q_3)$, which is  the second neighbour of $(I_1,l(I_1))$ in $C_j$.   We now show that all the points in $I_3$ are at least $r(I_1)$, and thus they are all larger than $q$.    By the definition of $E(\hat{G})$ we know that $I_3 \cap I_1 \neq \emptyset$.  Further, $q_3$ is an endpoint of $I_3$, and $q_3$ is not in the strict interior of $I_1$, and since $q$ is in $I_1$ and $I_2$, and as per our convention each interval corresponds to a single hyperedge in $H$, it follows that $q_3$ is different from $q$.  Consequently, it follows that $l(I_3) = r(I_1)$ and $q_3$ is $r(I_3)$.  Note that this argument includes the case when $|I_3| = 1$, in which case $r(I_1) = q_3$.  It follows that $I_3$ is an interval such that each point in $I_3$ is  at least $r(I_1)$ which is larger than $q$.\\
Therefore from the conclusions made thus far, each node in $C_j$ is one of two types: either the  hyperedge coordinate is such that all the points in the corresponding interval are at most  $q$ or the hyperedge coordinate is such that all the points in the corresponding interval are more than $q$.  In particular, $I_2$ is such that all the points are at most $q$ and $I_3$ is such that all points are more than $q$.  Since $C_j$ is an induced cycle, it follows that there are two adjacent nodes $(I_l,q_l)$ and $(I_r,q_r)$ such that all points in $I_l$ are at most $q$, and all points in $I_r$ are more than $q$.  In other words, $I_l$ and $I_r$ are two disjoint intervals, and we have concluded that  $(I_l,q_l)$ and $(I_r,q_r)$ are adjacent.  This is a contradiction to the definition of $E(\hat{G}) = E_{colour} \cup E_{edge}$.  This contradiction has been arrived at due to the assumption that there is a $C_j$ of odd length at least 5.  
Hence, in this case our hypothesis that there is a minimal $H$ for which $\hat{G}$ is not perfect is wrong.\\
\noindent
{\em Case 2- When $F$ is the complement of an odd cycle, say $\overline{C_{j}}, j \geq 5$}: Here we order the nodes in non-decreasing order of their vertex coordinate. Let the order be $(I_1,p_1), (I_2,p_2), \ldots,$ $ (I_j,p_j)$.  Since each node is adjacent to exactly $j-3$ vertices in $F$, it follows that $(I_1,p_1)$ is not adjacent to $(I_{j-1},p_{j-1})$ and $(I_j,p_j)$ in $\hat{G}$.  Similarly, $(I_j,p_j)$ is not adjacent to $(I_1,p_1)$ and $(I_2,p_2)$.  The reason is that if there is an edge between $(I_1,p_1)$ and $(I_{j-1},p_{j-1})$ then one of the two nodes is adjacent to all the nodes whose vertex coordinates are between $p_1$ and $p_{j-1}$.  Such a node will have degree $j-2$ which contradicts the fact that all the nodes in $F$ have degree $j-3$.  Since the degree of each vertex is $j-3$, it follows that $(I_1,p_1)$ is adjacent to all nodes from $(I_2,p_2)$ to $(I_{j-2},p_{j-2})$.  Similarly, $(I_j,p_j)$ is  adjacent to all nodes from $(I_3,p_3)$ to $(I_{j-1},p_{j-1})$.  Now, let us consider $(I_2,p_2)$ and $(I_{j-1},p_{j-1})$. If these two nodes are adjacent in $F$, then one of the two will have  degree at least $j-2$. Such a case cannot happen. Therefore, the nodes $(I_1,p_1),(I_j,p_j),(I_2,p_2),(I_{j-1},p_{j-1}),(I_1,p_1)$ forms an induced 4-cycle in $\overline{F}$.  $\overline{F}$ is an induced cycle $C_j, j \geq 5$, and by definition does not contain an induced cycle of length 4.  Thus our hypothesis that $\hat{G}$ contains $F$ is false.  \\
 In either case the assumption of the existence of a minimal $H$ for which $\hat{G}$ is not perfect leads to a contradiction to the known structure of perfect graphs. 
Hence, it follows that for an interval hypergraph $\hat{G}$ is perfect. 
\qed
\end{proof}
\noindent
We next prove that each  co-occurrence graph of an interval hypergraph is a perfect graph.
\begin{theorem}\label{thm:Co-occPerf}
Let $H = (\mathcal{V},\mathcal{I})$ be an interval hypergraph and $k \leq n$ be any positive integer. Let $t$ denote a $k$-representative function defined on $\mathcal{I}$. Then, the co-occurrence graph $\Gamma_{t}(H)$ is a perfect graph.

\end{theorem}
%\TheoremCoOccPerf*
\begin{proof}
We use property P2 of perfect graphs stated in Section \ref{sec:Prelims} to prove this result. We show that $\Gamma_{t}$ does not have an odd cycle or its complement as induced subgraphs. We first show that $\Gamma_{t}$ does not have an induced cycle of length at least 5. Our proof is by contradiction. 
Assume that $F = \{p_1,p_2 \ldots p_r\}$ is an induced $C_r$-cycle for $r \geq 5$. Let the sequence of nodes in $F$ be $p_1,p_2 \ldots p_r,p_1$. Let $p_i$ be the rightmost point of $F$ on the line.  In what follows, the arithmetic among the indices of $p$ is $mod$ $r$.  
Observe that, due to cyclicity of $C$, if $i = 1$, then $i-1 = r$. Similarly, if $i = r$, then $i+1 = 1$ and $i+2 = 2$. Without loss of generality, let us assume that $p_{i-1} < p_{i+1}$, which are the two neighbours of $p_i$ in $F$. Therefore, $p_{i-1} < p_{i+1} < p_i$.  Since edge $(p_{i-1},p_i)$ is in $F$, it follows that there exists an interval $I 
\in \mathcal{I}$ for which $p_{i-1}$ belongs to $I$ and $t(I) \cap \{p_{i-1},p_i\} \neq \emptyset$. We claim that $p_i \in t(I)$ and $p_{i-1} \notin t(I)$: if $t(I)$ contains $p_{i-1}$, then $(p_{i-1},p_{i+1})$ is an edge in $\Gamma_{t}$ by definition. Therefore, $(p_{i-1},p_{i+1})$ is a chord in $F$, a contradiction to the fact that $F$ is an induced cycle.  Therefore, $t(I)$ contains $p_i$ and not $p_{i-1}$.  Further, we claim that the point $p_{i+2} < p_{i-1}$: if $p_{i+2} > p_{i-1}$, then $p_{i+2}$ belongs to the interval $I$ and by the definition of the edges in $\Gamma_{t}$, $(p_i,p_{i+2})$ is an edge in $\Gamma_{t}$.  Therefore,  $(p_i,p_{i+2})$ is a chord in $F$. This contradicts the fact that $F$ is an induced cycle. Therefore, $p_{i+2} < p_{i-1}$.   At this point in the proof we have concluded that $p_{i+2} < p_{i-1} < p_{i+1} < p_i$ and $\{p_{i-1},p_i\} \setminus p_{i-1} \in t(I)$.  Since $(p_{i+1},p_{i+2})$ is an edge in $F$, it follows that there exists an interval $J$ such that both $p_{i+1}$ and $p_{i+2}$ belong to $J$ and $t(J) \cap \{p_{i+1},p_{i+2}\} \neq \emptyset$.  Since $F$ is an induced cycle of length at least 5, for each $p \in t(J)$, $(p_{i-1},p)$ is an edge in $\Gamma_{t}$ by definition. Therefore,  $(p_{i-1},p)$ is a chord in either case,  that is when $p_{i+1} \in t(J)$ or $p_{i+2} \in t(J)$.  This contradicts the assumption that $F$ is an induced cycle of length at least 5.  Thus, $\Gamma_{t}$ cannot have an induced cycle of size at least 5.

Next, we show that $\Gamma_{t}$ does not contain complements of cycles of length $\geq 5$, ($\overline{C_r}$, $r \geq 5$) as an induced subgraph. Again, our proof is by contradiction. Assume that $F$ is an induced $\overline{C_{r}}$, $r \geq 5$ in $\Gamma_{t}$. Let $q_1, q_2, \ldots, q_r$ be the nodes of $F$. Also, let $q_1 < q_2 < \ldots < q_r$ be the left to right ordering of points on the line corresponding to vertices of $F$. Since $deg(q_i) = r-3$ for all $q_i$ in $F$, it follows that no interval $I$, such that $t(I) \cap F \neq \emptyset$, contains more than $r-2$ vertices from $F$. Otherwise, if there exists an interval $I$ such that $t(I) \cap F \neq \emptyset$ contains more than $r-2$ vertices from $F$, then for each $q \in t(I)$, $deg(q) \geq r-2$ in $F$ which is a contradiction. Therefore, there does not exist any interval that contains both $q_1$ and $q_r$. Similarly, there does not exist any interval that contains both $q_1$ and $q_{r-1}$ and any interval that contains both $q_2$ and $q_{r}$. Since $deg(q_1) = r-3$, it follows that $q_1$ must be adjacent to all vertices in $\{q_2,q_3,\ldots,q_{r-2}\}$.
%(refer Fig \ref{fig:PerfComple}). 
Similarly, $q_r$ must be adjacent to all vertices in $\{q_3,q_4,\ldots,q_{r-1}\}$. Next, we consider the degrees of vertices $q_2$ and $q_{r-1}$ in $F$. Since they are in $F$, $q_2$ is adjacent to $q_1$ and $q_{r-1}$ is adjacent to $q_r$. 
Now, $q_2$ must be adjacent to $r-4$ more vertices. We show that $q_{2}$ is not adjacent to $q_{r-1}$. Suppose not, that is, if $q_{2}$ is adjacent to $q_{r-1}$, then there exists an interval $I$ that contains both $q_{2}$ and $q_{r-1}$ and $t(I) \cap \{q_2, q_{r-1}\} \neq \emptyset$. Then each point $q \in t(I)$ is adjacent to all points in the set $
\{\{q_2,q_3,\ldots,q_{r-1}\} \setminus q\}$. Thus, by considering the one additional edge incident on $q$ depending on whether $q = q_2$ or $q = q_{r-1}$, it follows that $deg(q) \geq r-2$, a contradiction to the fact that the degree of each vertex inside $F$ is $r-3$.   Therefore, it follows that  $(q_{2},q_{r-1})$ does not exist in $F$. It follows that in $
\overline{F}$, which we know is an induced cycle of length at least 5,   there is an induced cycle $q_1, q_{r-1}, q_2, q_r, q_1$ of length $4$.  This contradicts the structure of an induced cycle of length at least 5.   Hence, we conclude that $\Gamma_{t}$ does not have an induced cycle of length 5 or more or its complement.  Therefore $\Gamma_{t}$ is a perfect graph. 
\qed
\end{proof}
We have now set up all the machinery to formulate a linear program for the $k$-SCF colouring problem on interval hypergraphs and to solve it in polynomial time.

% !TEX root = CFCIntervalsDhannyaMain.tex
\section{Optimally solving \texorpdfstring{$k$}{k}-SCF colouring in interval hypergraphs} \label{sec:AlgoForK-SCF}
In this section, we present Algorithm \texttt{CFC-Intervals} (Algorithm \ref{algo:MainAlgo}) that optimally solves the $k$-SCF colouring in interval hypergraphs in $\P$-time. The important steps in Algorithm \ref{algo:MainAlgo} have been given in the flowchart in Figure \ref{fig:flowchart2}.
%Template source: http://www.texample.net/tikz/examples/simple-flow-chart/

% Define block styles
%\tikzstyle{decision} = [diamond, draw, 
%    text width=4.5em, text badly centered, node distance=1cm, inner sep=0pt]
\tikzstyle{block} = [rectangle, draw, 
    text width=20em, text centered, rounded corners, minimum height=2em]
\tikzstyle{line} = [draw, -latex']
\tikzstyle{cloud} = [draw, ellipse, minimum height=2em]
\begin{figure}
\centering
\begin{tikzpicture}[node distance = 1.6cm, auto, scale=0.8,every  node/.style={scale=0.8}]

% Place nodes   
%    polynomial time separation oracle
%    Let $q_{min}$ be the smallest $q$ for which a feasible solution exists for the LP. Let this solution be  $X_{opt}$.};
%   and obtain an integral solution $X_{optI}$, which is an exact-$k$-hitting set of hyperedge cliques of $\hat{G}$ that hits each colour clique at most $q_{min}$ times.}

	\node [cloud,draw](hyper) at (-1,4) {Input};
    \node [rectangle, draw, text width=7em, text centered, rounded corners, minimum height=2em] (constructConflict) at (5,4) {Construct conflict graph $\hat{G}(H)$};
    \node [rectangle, draw, text width=7em, text centered, rounded corners, minimum height=2em] (formulateLP) at (-1,0) {On $\hat{G}$ as input, formulate LP};
	\node [rectangle, draw, text width=10em, text centered, rounded corners, minimum height=2em] (SPAlg) at (6,0) {Run Algorithm $\mathtt{SPAlg}$ to solve LP using the ellipsoid method by referring to \texttt{SPMaxWtClique}};
	\node at (10,2) [draw, rectangle, text width=10em, text centered, rounded corners, minimum height=2em] (SPMaxWtClique) {Polynomial time separation oracle \texttt{SPMaxWtClique}};
	\node at (11.5,0) [draw, rectangle, text width=10em, text centered, rounded corners, minimum height=2em] (Rounding) {On $X_{opt}$ as input, run the rounding algorithm \texttt{Rounding}};                 
	\node at (3,-3.5) [draw, rectangle, text width=15em, text centered, rounded corners, minimum height=2em] (obtainKRep) {From $X_{optI}$, obtain the $k$-representative function $t$};   
	\node at (9,-3.5) [draw, rectangle, text width=10em, text centered, rounded corners, minimum height=2em] (constructCoOcc){Construct the co-occurrence graph $\Gamma_{t}$}; 
	\node at (2,-7) [draw, rectangle, text width=10em, text centered, rounded corners, minimum height=2em] (propColourGamma){Obtain a proper colouring $C$ of $\Gamma_{t}$}; 
	\node at (7,-7) [draw, rectangle, text width=10em, text centered, rounded corners, minimum height=2em] (kSCF){Obtain a $1$-SCF colouring $C_{cf}(H)$ from $C(\Gamma_{t})$}; 
	%\node at (5,-6) [draw, rectangle, text width=10em, text centered, rounded corners, minimum height=2em] (output){$C_{cf}(H)$}; 	
	\node [cloud,draw](output) at (5,-9) {Output};

	\node at (5.5,0.8) [draw,dotted, rectangle, text width=47em, text centered, rounded corners, minimum height=12em] (borderLP) {};
	\node at (5.5,-3.5) [draw,dotted, rectangle, text width=40em, text centered, rounded corners, minimum height=10em] (borderCoOcc) {};
	\node at (5.5,-7) [draw,dotted, rectangle, text width=35em, text centered, rounded corners, minimum height=6em] (borderColour) {};
%	
%    \node [decision, below of=evaluate] (decide) {is best candidate better?};
%    \node [block, below of=decide, node distance=3cm] (stop) {stop};
    % Draw edges
    \path [line] (hyper) -- node {Interval hypergraph $H$} (constructConflict);
    \path [line] (constructConflict) -- node[above] {$\hat{G}$} (formulateLP);
    \path [line] (formulateLP) -- node {LP instance $\mathcal{B}$} (SPAlg);
    \path [line] (SPMaxWtClique) -- (SPAlg); 
    \path [line] (SPAlg) -- node {$X_{opt}$} (Rounding);
    \path [line] (Rounding) -- node[below] {$X_{optI}$} (obtainKRep);
	\path [line] (obtainKRep) -- node {$t$}  (constructCoOcc);
	\path [line] (constructCoOcc) -- node[above] {$\Gamma_{t}$} (propColourGamma);
	\path [line] (propColourGamma) -- node {$C(\Gamma_{t})$} (kSCF);
	\path [line] (kSCF) -- node {$C_{cf}(H)$} (output);
    %\path [line] (decide) -| node [near start] {yes} (update);
    %\path [line] (update) |- (identify);
%    \path [line] (decide) -- node {no}(stop);
%    \path [line] (input) -- (init);
    %\path [line] (system) -- (init);
    %\path [line] (system) |- (evaluate);
\end{tikzpicture}
\caption{Flow chart depicting key steps of the main algorithm}
\label{fig:flowchart2}   
\end{figure}

% in Section   \ref{sec:intro}.
%\input{flowchart2.tex}
%Algorithm \ref{algo:MainAlgo} comprises of the following important steps:
%\begin{enumerate}
%\item Construct the conflict graph $\hat{G}$ of input interval hypergraph $H$.
%\item On $\hat{G}$ as input, run Algorithm $\mathtt{SPAlg}$ that solves our LP formulation using the ellipsoid method that, in turn, refers to a polynomial time separation oracle \texttt{SPMaxWtClique}. Let $q_{min}$ be the smallest $q$ for which a feasible solution exists for the LP. Let this solution be  $X_{opt}$.
%\item Run the rounding algorithm \texttt{Rounding} with $X_{opt}$ as input and obtain an integral solution $X_{optI}$, which is an exact-$k$-hitting set of hyperedge cliques of $\hat{G}$ that hits each colour clique at most $q_{min}$ times.
%\item From $X_{optI}$, obtain the $k$-representative function $t$.
%\item Construct the co-occurrence graph $\Gamma_{t}$.
%\item Obtain a proper colouring $\chi$ of $\Gamma_{t}$.
%\item Translate the colouring $\chi$ to a $k$-SCF colouring $\chi_{cf}$ of $H$.
%\end{enumerate}

\begin{algorithm}[h]
\caption{\texttt{CFC-Intervals}(Interval hypergraph $H = (\mathcal{V},\mathcal{I})$,$k$)}
\label{algo:MainAlgo}
\begin{algorithmic}[1]
\FOR {each $v \in \mathcal{V}$}
\STATE{$C_{cf}(v) \leftarrow 0$} \hfill $\blacktriangleright$ \textit{\small{Initialize colour of each vertex to $0$}}
\ENDFOR
\STATE{$\hat{G} \leftarrow $ conflict graph of $H$}
\STATE{$X_{opt}, q_{min} \leftarrow $\texttt{SPAlg}$(\hat{G},k)$}
\hfill
$\blacktriangleright$ \textit{\small{Solve the LP to obtain $X_{opt}$ and $q_{min}$}}
\STATE{$X_{optI} \leftarrow $ \texttt{RoundingFrac}($X_{opt},\mathcal{I}$)}
\hfill
$\blacktriangleright$ \textit{\small{Round the LP solution to $0,1$ solution}}
%\STATE{$X^1 \leftarrow \{x_{I,u} \mid x_{I,u} = 1 \text { in solution } X_{optI} \}$}
\STATE{$S_{min} \leftarrow \{(I,u) \mid x_{I,u} = 1 \text { in solution } X_{optI}\}$}
\hfill
$\blacktriangleright$ \textit{\small{Hitting set of hyperedge cliques}}
\STATE{Define $t$ as follows: $u \in t(I) \Leftrightarrow (I,u) \in S_{min}$}
\hfill
$\blacktriangleright$ \textit{\small{$k$-representative function}}
\STATE{$\Gamma_{t} \leftarrow$ Co-occurrence graph on $t$}
\STATE{$C \leftarrow $ A proper colouring of $\Gamma_{t}$}
\FOR {each $v \in \mathcal{V}$}
\IF {$v \in V(\Gamma_{t})$}
\STATE{$C_{cf}(v) \leftarrow C(v)$}
\hfill
$\blacktriangleright$ \textit{\small{$1$-SCF colouring from proper colouring of $\Gamma_{t}$}}
\ENDIF
\ENDFOR
\RETURN $C_{cf}, q_{min}$ \hfill
$\blacktriangleright$ \textit{\small{$q_{min}$ is the $k$-SCF colouring number of $H$}}
\end{algorithmic}
\end{algorithm}

\subsection{\texorpdfstring{$k$}{k}-representative function from a hitting set of hyperedge cliques in \texorpdfstring{$\hat{G}$}{the conflict graph}} 
\label{sec:kCFCfromEHSofType1Cliques}
Let  $H = (\mathcal{V},\mathcal{I})$ be an interval hypergraph and let $\hat{G}$ be its conflict graph. Let $S_{min} \subseteq V(\hat{G})$ be an exact-$k$-hitting set of hyperedge cliques of $\hat{G}$ that hits every colour clique at most $q_{min}$ times, and let $q_{min}$ be the smallest integer for which such an $S_{min}$ exists. It follows that $S_{min}$ will have exactly $\min\{|I|, k\}$ nodes corresponding to each interval $I$. %Since there are $m$ cliques in $\mathcal{Q}_1$, each corresponding to an interval, it further follows that $|S_{min}| = k*m$.  
Define the function $t$ from $\mathcal{I}$ to $[\mathcal{V}]^k$ as follows: $t(I) = \{u \mid (I,u) \in S_{min}\} $. In Lemma \ref{lem:OmegaGtLThanQmin}, we strengthen Theorem \ref{thm:Co-OccConflRelation} for interval hypergraphs: we show  that $t$ is indeed a $k$-representative function and that the chromatic number of the co-occurrence graph $\Gamma_{t}$ is upper bounded by $q_{min}$. 
% by proving the equality of the chromatic number of the subgraph of conflict graph induced a constrained exact-$k$-hitting set of hyperedge cliques of the conflict graph and the co-occurrence graph.  This plays a crucial role in formulating the LP relaxation in Section \ref{subsec:LP} to compute a constrained exact-$k$-hitting set of hyperedge cliques. 
\begin{lemma} 
\label{lem:OmegaGtLThanQmin}
%Let $t$ be a relation as defined above.  
The function  $t$ as defined above 
is a $k$-representative function obtained from some $k$-SCF colouring and $\chi(\Gamma_{t}) \leq q_{min}$.

\end{lemma}
\begin{proof} 
Since $S_{min}$ is an exact-$k$-hitting set of hyperedge cliques, it follows that for every hyperedge $I \in \mathcal{I}$, there exists exactly $\min\{|I|, k\}$ nodes in $S_{min}$ whose hyperedge coordinate is $I$. 
%Hence, $t$ is a function from $\mathcal{I}$ to $[\mathcal{V}]^k$.
Since every interval is assigned exactly $\min\{|I|, k\}$ representatives by $t$, it follows from the proof of Theorem \ref{thm:kCo-occChar} that any proper colouring of $\Gamma_{t}$ is a $k$-SCF colouring of $H$. Therefore, $t$ is a $k$-representative function obtained from such a $k$-SCF colouring of $H$.
Now, we show that $\chi(\Gamma_{t}) \leq q_{min}$.  In Theorem \ref{thm:Co-occPerf}, we showed that $\Gamma_{t}$ is a perfect graph. It follows from property P1 of perfect graphs in Section \ref{sec:Prelims} that the clique number $\omega$ and the chromatic number $\chi$ of every induced subgraph of $\Gamma_{t}$ are equal. Hence it suffices to show that $\omega(\Gamma_{t}) \leq q_{min}$. Further, since every colour clique is hit at most $q_{min}$ times by $S_{min}$, it follows that the clique number of $\hat{G}[S_{min}]$ is at most $q_{min}$. Hence it is sufficient to show that $\omega(\Gamma_{t}) \leq \omega(\hat{G}[S_{min}])$.  In particular, for each clique in $\Gamma_{t}$ we identify a clique of the same size in $\hat{G}[S_{min}]$.  

The proof is by induction on the size of a clique in $\Gamma_{t}$. The base case is for a clique of size $1$ in $\Gamma_{t}$.  Clearly, there is a clique of size 1 in $\hat{G}[S_{min}]$.   By the induction hypothesis, corresponding to a clique comprising of $u_1,u_2,\ldots,u_{q-1}$ in $\Gamma_{t}$, there is a clique containing nodes $(I_1,u_1),(I_2,u_2),\ldots,(I_{q-1},u_{q-1})$ in $\hat{G}[S_{min}]$. Now, we prove the claim when there are $q$ vertices in a clique in $\Gamma_{t}$. Let $u_1,u_2,\ldots,u_q$ be the set of vertices in the clique. 
Without loss of generality, assume that $u_1,u_2,\ldots,u_{q-1},u_q$ is the left to right ordering of points on the line. Since $(u_1, u_q) \in E(\Gamma_{t})$, there exists an interval, say $I'$ such that $u_1$ and $u_q$ belong to $I'$ and $t(I') \cap \{u_1,u_q\} \neq \emptyset$.   We prove the claim for the case when $u_1 \in t(I')$.  It follows that the node $(I',u_1) \in S_{min}$.  Since both $u_1$ and $u_q$ belong to the interval $I'$, it follows that $u_2,\ldots,u_{q-1}$ also belong to interval $I'$. By the induction hypothesis, for the $q-1$ sized clique $u_2, u_3, \ldots, u_q$ in $\Gamma_{t}$, there is a clique containing the nodes $(I_2,u_2),(I_3,u_3),\ldots,(I_{q},u_{q})$ in $\hat{G}[S_{min}]$.   Therefore,  it follows that $(I',u_1)$ is adjacent to all the nodes $(I_2,u_2),(I_3,u_3),\ldots,(I_{q},u_{q})$ in $\hat{G}$. It follows that corresponding to the clique $u_1, \ldots, u_q$ in $\Gamma_{t}$, there is a clique $(I',u_1),(I_2,u_2),\ldots,(I_{q},u_{q})$ in $\hat{G}[S_{min}]$.   
In case $u_q \in t(I')$, an identical argument is applied to the clique $u_1, \ldots, u_{q-1}$ in $\Gamma_{t}$ to prove the claim.  Hence the lemma.
\qed
\end{proof}
We next show that finding a $k$-SCF colouring using minimum colours is equivalent to finding an exact-$k$-hitting set of hyperedge cliques such that colour cliques are hit as few times as possible. 
\begin{lemma}
\label{lem:khitsetequiv}
There exists a set $S \subseteq V(\hat{G})$ such that 
\begin{itemize}
\item for each $Q \in \mathcal{Q}_1$, $|S \cap Q| = \min\{|I|, k\}$, where $Q$ is the clique corresponding to interval $I$
\item for each $Q' \in \mathcal{Q}_2$, $|S \cap Q'| \leq q$

\end{itemize}
if and only if there is a $k$-SCF colouring of $H$ with $q$ colours.

\end{lemma}
\begin{proof} Let $S$ be a subset of $V(\hat{G})$ such that for each $Q \in \mathcal{Q}_1$, $|S \cap Q| = \min\{|I|, k\}$, where $Q$ is the clique corresponding to interval $I$ and for each $Q' \in \mathcal{Q}_2$, $|S \cap Q'| \leq q$. Then by Lemma \ref{lem:OmegaGtLThanQmin}, there exists a $k$-representative function $t$ such that $\chi(\Gamma_{t}) \leq q$.  It further follows from Theorem \ref{thm:kCo-occChar} that a proper colouring of $\Gamma_{t}$ is a $k$-SCF colouring of $H$ using $\chi(\Gamma_{t}) \leq q$ colours.  This completes the forward direction of the claim.\\
%Since co-occurrence graphs are perfect by Theorem \ref{thm:Co-occPerf}, it follows 
%from Lemma \ref{lem:tIsRepAndOmegaGtLThanKMin} 
%that $\chi(\Gamma_{t}) = \omega(\Gamma_{t})$. 
Next we prove the reverse direction. Let $C$ be a $k$-SCF colouring of $H$ using $q$ colours. Then by Theorem \ref{thm:kCo-occChar}, $C$ gives a $k$-representative function $t$ with the property $\chi(\Gamma_{t}) \leq q$.   Since each co-occurrence graph of an interval hypergraph is perfect by Theorem \ref{thm:Co-occPerf}, it follows that $q \geq \chi(\Gamma_{t}) = \omega(\Gamma_{t})$.  Define $S \triangleq \{(I,u)  \mid I \in \mathcal{I}, u \in t(I)\}$. By Theorem \ref{thm:Co-OccConflRelation}, the set $S$ is an exact-$k$-hitting set of cliques in $\mathcal{Q}_1$ and $\omega(\hat{G}[S]) \leq \omega(\Gamma_{t}) \leq q$.   Thus we conclude that if there is a $k$-SCF colouring of $H$ using $q$ colours, then there exists an exact-$k$-hitting set of cliques in $\mathcal{Q}_1$ that intersects every maximal clique in $\mathcal{Q}_2$ at most $q$ times.
\qed
\end{proof}
Lemma \ref{lem:khitsetequiv}  naturally results in an LP formulation in Section \ref{subsec:LP} to solve the problem of finding an exact-$k$-hitting set of hyperedge cliques such that colour cliques are hit as few times as possible.
\subsection{Linear Program for exact-\texorpdfstring{$k$}{k}-hitting sets of hyperedge cliques} 
\label{subsec:LP}
%From Lemma \ref{lem:khitsetequiv}, it is clear that an optimal $k$-SCF colouring of an interval hypergraph $H$ can be found by computing an exact-$k$-hitting set of hyperedge cliques of $\hat{G}(H)$ such that each colour clique is hit as few times as possible.  
Let $H = (\mathcal{V},\mathcal{I})$ be the input interval hypergraph. We obtain an exact-$k$-hitting set of hyperedge cliques of $\hat{G}(H)$, that hits each colour clique as few times as possible, through an LP formulation. In this LP, there is one variable corresponding to each node of $\hat{G}$. Additionally, $q > 0$ is a fixed integer value that remains constant throughout the execution of the LP. When the LP is run for the first time, $q$ is initialized to $1$. If the LP does not have a feasible solution for the current value of $q$, then $q$ is incremented by $1$ and the LP is executed using the new value of $q$. From our earlier results and the results presented in the following sections, it will become clear that the maximum of $k$ and the smallest value of $q$ for which a feasible solution is obtained for the LP is, in fact, the $k$-SCF colouring number of $H$. \\
First, we present the linear program. Define $X \triangleq \{ x_{I,u} \mid (I,u) \in \hat{G}\}$ to be the set of variables in the LP, where
\[
x_{I,u} = 
\begin{cases}
1, &\quad\text{if node } (I,u)  \text{ hits hyperedge clique corresponding to } I\\
0, &\quad\text{otherwise}
\end{cases} 
\]
\noindent
\textbf{LP Formulation. }\\
Find values to variables in the set $\displaystyle \{x_{I,u} \mid u \in I, I \in \mathcal{I}\}$  subject to \\
\begin{minipage}{\linewidth}
 \begin{align}
%&\text{Find values to variables }   \displaystyle \{x_{I,u} \mid u \in I, I \in \mathcal{I}\} \text{ subject to}  \nonumber\\
&\displaystyle\sum\limits_{u \in I} x_{I,u} = \min\{|I|, k\}, \forall I \in 	\mathcal{I} 
\label{eqn:ehs3} \\
&\displaystyle\sum\limits_{(I,u) \in Q} x_{I,u} \leq q,  \text{ for each maximal clique }Q \text{ in } \mathcal{Q}_2
\label{eqn:ehs4} \\
&x_{I,u} \leq 1
\label{eqn:ehs5} \\
&q > 0 \nonumber
\end{align}
\end{minipage}

The LP has a set of equations (Equation \ref{eqn:ehs3}) and a set of inequalities (Equations \ref{eqn:ehs4} and \ref{eqn:ehs5}). Logically, an equation corresponds to choosing exactly $\min\{|I|, k\}$ vertices per interval $I$; that is, an equation corresponding to interval $I$ corresponds to choosing exactly $\min\{|I|, k\}$ nodes from the clique corresponding to $I$ in $\mathcal{Q}_1$. On the other hand, an inequality in Equation \ref{eqn:ehs4} corresponds to a maximal clique in $\mathcal{Q}_2$. Logically, the inequality means that we pick at most $q$ nodes from every maximal clique in $\mathcal{Q}_2$. Together, any integral solution to the LP is an exact-$k$-hitting set of cliques in $\mathcal{Q}_1$ such that each maximal clique in $\mathcal{Q}_2$ is hit at most $q$ times. 
%This LP is solved using the ellipsoid method which uses a polynomial time separation oracle that we next design. Note that the optimum solution thus obtained may have fractional values. In Section \ref{subsubsec:RoundingLP}, we present a rounding technique that converts this fractional solution to a feasible integer solution for the LP in polynomial time. 
This LP relaxation is solved using the ellipsoid method \cite{Grotschel1981} which uses a polynomial time separation oracle that we next design. Let $x$ denote an optimum solution to the LP relaxation.  In Section \ref{subsubsec:RoundingLP}, we present a rounding technique that converts the fractional solution $x$ to a feasible integer solution for the LP in polynomial time.

\subsection{Separation Oracle based LP Algorithm \texorpdfstring{$\mathtt{SPAlg}$}{SPAlg}} 
\label{subsec:kSepOracle}
\begin{algorithm}[h]
\caption{\texttt{SPAlg}$(\hat{G},k)$}
\label{algo:SPAlg}
\begin{algorithmic}[1]
\STATE{$q \leftarrow 1$}
\STATE{$\mathcal{B} \leftarrow $ LP instance corresponding to $\hat{G},k$ with $q = 1$}
\WHILE{not \texttt{SPMaxWtClique}($\mathcal{B},q$)}
\STATE{$q \leftarrow q+1$}
\STATE{$\mathcal{B} \leftarrow $ LP instance corresponding to $\hat{G},k$ with new value of $q$}
\ENDWHILE
\STATE{$q_{min} \leftarrow q$}
\STATE{$X_{opt} \leftarrow $ solution for LP instance $\mathcal{B}$ on $q_{min}$}
\RETURN $X_{opt},q_{min}$
\end{algorithmic}
\end{algorithm}
Algorithm $\mathtt{SPAlg}$ (Algorithm \ref{algo:SPAlg}) returns an optimal solution for the LP in $\P$-time using the ellipsoid method which repeatedly invokes a polynomial time separation oracle, which we refer to as \texttt{SPMaxWtClique} (Algorithm \ref{algo:SPMaxWtClique}). 
%\textcolor{red}{Should I add description of SPAlg here?}
We describe \texttt{SPMaxWtClique} for a fixed positive integer value $q$ below. 
\begin{algorithm}[h]
\caption{\texttt{SPMaxWtClique}$(\mathcal{B},q)$}
\label{algo:SPMaxWtClique}
\begin{algorithmic}[1]
\STATE{$\hat{G}^w \leftarrow$ vertex-weighted graph of $\hat{G}$ defined as follows: \\for each $(I,v) \in V(\hat{G}^w)$, $w\big((I,v)\big) \leftarrow x_{I,v}$}
\STATE{$Q_{max} \leftarrow $maximum weight clique of $\hat{G}^w$}
\IF{$w(Q_{max}) > q$}
\RETURN{NO}
\hfill
$\blacktriangleright$ \textit{\small{The inequality corresponding to $Q_{max}$ is the violated inequality}}
\ELSIF{any equality is violated}
\RETURN{NO} \hfill
$\blacktriangleright$ \textit{\small{There is a violated equality}}
\ELSE
\RETURN{YES}
\ENDIF
\end{algorithmic}
\end{algorithm}
For each $(I,v)\in V(\hat{G})$, let $x_{I,v}$ be a rational value assigned to the corresponding variable in the LP relaxation.   Given this as an input, for a fixed positive integer value $q$, the separation oracle  \texttt{SPMaxWtClique}
considers the vertex-weighted graph $\hat{G}^w$ corresponding to $\hat{G}$, where the weight of node $(I,v)$ is $x_{I,v}$ for all $(I,v)\in V(\hat{G})$.
The oracle then computes the maximum weight clique of $\hat{G}^w$. 
If the weight of the maximum weight clique of $\hat{G}^w$ exceeds $q$, then it follows that there is some maximal clique $Q'$ whose weight is more than $q$. 
This implies that the given point violates  the inequality corresponding to $Q'$, and this inequality is returned by the oracle as the violated inequality.  If the weight of the maximum weight clique is at most $q$, then the oracle checks if all equalities hold. If any equality is violated, then we have a violated constraint, which is returned by the oracle as the violated equality.  If all the constraints are satisfied, then the oracle reports that the given point is feasible. This completes the description of the separation oracle \texttt{SPMaxWtClique}.  We show in Lemma \ref{lem:kSepOracPoly} that \texttt{SPMaxWtClique} runs in polynomial time. 

\begin{lemma} 
\label{lem:kSepOracPoly}
For an input interval hypergraph and for each integer value $q \geq 0$, the separation oracle \texttt{SPMaxWtClique} runs in polynomial time.

\end{lemma}
\begin{proof} 
For an  interval hypergraph $H$, $\hat{G}(H)$ is perfect by Theorem \ref{thm:ConflictPerf}.  From property P4 of perfect graphs listed in Section \ref{sec:Prelims}, it is known that the maximum weight clique problem in perfect graphs can be solved in polynomial time.  Thus we can find the maximum weight clique in the vertex-weighted graph $\hat{G}^w$ .
%By a simple polynomial time reduction from the maximum weight clique problem in a graph $G$ to the maximum weight independent set problem in the complement of $G$, we know that the maximum weight clique in perfect graphs can be obtained in polynomial time. 
That is, finding an inequality in the LP corresponding to a maximal clique whose weight exceeds $q$ can  be done in polynomial time. Also, since there are only polynomial number of hyperedge cliques, it follows that detecting the presence of an infeasible equation can also be done in polynomial time.  It follows that \texttt{SPMaxWtClique} runs in polynomial time. 
\qed
\end{proof}
Let $\mathcal{B}$ be an instance of the given LP. We now show that the LP can be solved in polynomial time by the algorithm $\mathtt{SPAlg}$ that takes as inputs the LP instance $\mathcal{B}$ and the variable $q$, and outputs an assignment to the variables in the set $X$. 
%It uses the separation oracle \texttt{SPMaxWtClique} for a fixed value of $q$ and if a feasible assignment exists, then returns an assignment of values to variables in $X$ such that the system is satisfied by the assignment to the variables in $X$. Otherwise, it reports that the system is infeasible. 
%Let $q_{min}$ be the smallest value of $q$ for which Algorithm $\mathtt{SPAlg}$ finds a feasible solution of the instance $\mathcal{B}$ and 
%Let $X_{opt}$ be the solution returned by Algorithm $\mathtt{SPAlg}$ for the integer $q_{min}$. 

\begin{lemma} 
\label{lem:kSPAlgPoly}
For an interval hypergraph, the algorithm $\mathtt{SPAlg}$ runs in polynomial time.

\end{lemma}
\begin{proof} 
We have shown in Lemma \ref{lem:kSepOracPoly} that the separation oracle in \texttt{SPMaxWtClique} runs in polynomial time. %\textcolor{red}{The ellipsoid method, using this separation oracle, finds a feasible solution in the polytope of $\mathcal{B}$ can be obtained in polynomial time for each $q$.} \textcolor{blue}{
For each $q$, the ellipsoid method finds a feasible solution in the polytope of $\mathcal{B}$ in polynomial time, using \texttt{SPMaxWtClique}. The number of values of $q$ is at most the number of points in the interval hypergraph $H$.  This is because, from Observation \ref{obs:NodesOfaVertexAreIndep}, for each vertex $u \in {\mathcal V}$ each clique in $\hat{G}$ can contain at most one node whose vertex coordinate is $u$.   Hence the lemma.
\qed
\end{proof}
\noindent
If the solution $X_{opt}$ returned by $\mathtt{SPAlg}$ is integral, then we have an integer solution in polynomial time. If $X_{opt}$ is not integral, then the rounding algorithm presented in the next section returns a feasible integer solution on the input $X_{opt}$. 

\subsection{Rounding the LP solution}
\label{subsubsec:RoundingLP} 
We present Algorithm \texttt{Rounding} (Algorithm \ref{algo:kRounding}) that takes as input the LP solution $X_{opt}$ for the value $q_{min}$ and returns a feasible integer solution.

\begin{algorithm}[ht]
\caption{\texttt{Rounding}($X_{opt},\mathcal{I}'$)}
\label{algo:kRounding}
\begin{algorithmic}[1]
\vspace{3mm}
\STATE{$i \leftarrow 0$}
\STATE{$X_{opt}(0) \leftarrow X_{opt}$}
\WHILE{$\exists x_{I,v} \in X_{opt}(i)$ that does not belong to $\{0,1\}$}
\STATE{$i \leftarrow i + 1$}
\STATE{$X_{opt}(i) \leftarrow X_{opt}(i-1)$} \hfill
$\blacktriangleright$ \textit{\small{Initialize variables for current iteration}}
\STATE{$I_i \leftarrow $ Longest Interval in $\mathcal{I}'$ with the smallest left endpoint }
\STATE{$r \leftarrow r(I_i)$ }
\STATE{$r-1 \leftarrow $ vertex to the immediate left of $r(I_i)$ on the line  }
\FOR{each interval $I'$ that contains $r$ and $r-1$} \label{algLine:roundingBegin}
\IF{$x_{I_i,r}(i-1) \geq 1$} 
%\hfill $\blacktriangleright$ \textit{\small{Does the value exceed 1? }}}
\STATE{$x_{I',r-1}(i) \leftarrow x_{I',r-1}(i-1) + (x_{I_i,r}(i-1)-1)$}
\hfill $\blacktriangleright$ \textit{\small{Add excess value to $(I',r-1)$}}
\STATE{$x_{I',r}(i) \leftarrow x_{I',r}(i-1) - (x_{I_i,r}(i-1)-1)$ } 
\hfill $\blacktriangleright$ \textit{\small{Subtract excess value from $(I',r)$}}
\ELSE 
\STATE{$x_{I',r-1}(i) \leftarrow x_{I',r-1}(i-1) + x_{I_i,r}(i-1)$ }
\hfill $\blacktriangleright$ \textit{\small{Add value to $(I',r-1)$}}
\STATE{$x_{I',r}(i) \leftarrow x_{I',r}(i-1) - x_{I_i,r}(i-1)$ }
\hfill $\blacktriangleright$ \textit{\small{Subtract value from $(I',r)$}}
\ENDIF \hfill $\blacktriangleright$ \textit{\small{$x_{I_i,r-1}(i)$ and $x_{I_i,r}(i)$ are also updated in this loop}}
\ENDFOR \label{algLine:roundingEnd}
\STATE{$\mathcal{I}' \leftarrow \mathcal{I}' \setminus I_i$ }
\label{algLine:shortenIntervalBegin}
\STATE{$I_i \leftarrow I_i \setminus r$}
\STATE{$\mathcal{I}' \leftarrow \mathcal{I}' \cup I_i$}
\label{algLine:shortenIntervalEnd}
\hfill
$\triangleright$ \textit{\small{Remove right endpoint of $I_i$}}
\ENDWHILE
\STATE{$X_{optI} \leftarrow X_{opt}(i)$}
\RETURN{$X_{optI}$}
\end{algorithmic}

\end{algorithm}

\noindent
\textbf{Description of Algorithm \texttt{Rounding}: }  Let $x_{I,u}(i)$ denote the value of variable $x_{I,u}$ in the $i^{th}$ iteration. We use $X_{opt}(i)$ to denote the values given to the variables at the beginning of iteration $i$.
% and $\mathcal{I}(i)$ denote the set of intervals considered during the $i^{th}$ iteration. 
At the start of the algorithm, $X_{opt}(1)$ is initialized to $X_{opt}$. In iteration $i$, $I_i$ is the interval with the smallest left endpoint among all intervals of maximum length. %Let $l(I_i)$ and $r(I_i)$ denote the left and right endpoints of interval $I_i$ respectively. 
Denote $r(I_i)$ by $r$ and the point immediately to the left of $r(I_i)$ by $r-1$.  For every other interval $I'$, denote its right endpoint and the point immediately to the left of the right endpoint by $r(I')$ and $r(I') - 1$ respectively. 

We crucially use the fact that in every iteration of the algorithm, at least one variable gets rounded to either $0$ or $1$.
The key steps in rounding procedure are given in steps \ref{algLine:roundingBegin} to \ref{algLine:roundingEnd}. Steps \ref{algLine:shortenIntervalBegin} to \ref{algLine:shortenIntervalEnd} describe the pruning step by which intervals are shortened in every iteration of \textit{while} loop.  
%Let $N(i)$ denote the total number of points in the intervals being considered. Since $r(I_i)$ is removed during iteration $i$, it follows that the total number of points (in all the intervals) in iteration $i+1$ is at least one less than the total number of points in iteration $i$. That is, $N(i) > N(i+1)$. Hence the conflict graph corresponding to intervals in iteration $i+1$ has strictly fewer number of nodes than the conflict graph corresponding to intervals in iteration $i$. 
%In Lemma \ref{lem:kfeasibleAfterRounding}, we show that for every $i \geq 0$, the solution $X_{opt}(i)$ is feasible for the linear program $\mathcal{B}$ for the value $q_{min}$.   
%Note that some variables may achieve negative values after rounding. However, this does not affect the feasibility of the LP. 
We show in Lemma \ref{lem:kRoundResultsIntegerSoln} that for some positive integer $j$, $X_{opt}(j)$ will be an all integer solution for $\mathcal{B}$, at which step, the algorithm terminates.
\begin{lemma} 
\label{lem:kRoundResultsIntegerSoln}
Let $X_{opt}$ be a fractional feasible solution returned by $\mathtt{SPAlg}(\mathcal{B}, q_{min})$. Then, on the input $X_{opt}$,  \texttt{Rounding} returns, in a polynomial number of steps,  a point in which each variable in $\mathcal{B}$ has a value in the set $\{0,1\}$.

\end{lemma}
\begin{proof} 
From the description of Algorithm \texttt{Rounding}, once the variable $x_{I_i,r}$ becomes either $0$ or $1$, its value does not change in any subsequent iteration.  Additionally, in iteration $i$, since we remove the right endpoint of $I_i$ irrespective of whether $x_{I_i,r}(i)$ is $0$ or $1$, it follows that in the set $\mathcal{I}'$ there is at least one interval whose length has reduced from its length in iteration $i-1$. Hence the number of variables whose value is not 0 or 1 reduces in each iteration.  Therefore every variable in the solution $X_{optI}$ returned by Algorithm \texttt{Rounding} is set to either $0$ or $1$ in at most $\sum_{I \in \mathcal{I}}|I|$ iterations. 
%
%Further, the rounding is such that if a variable $x_{I,r}$ is reduced by a certain value, then $x_{I,r-1}$ is increased by the exact same value.  in the next iteration we have at least one interval whose length is shorter than  smaller set of intervals. However, the variable  removing the right endpoint does not affect the values of variables in $X_{opt}(i)$. $x_{I_i,r}(i+1) = 1$
%%then the equation corresponding to $I_i$ will not add up to $k$. Denote the values to which the equations corresponding to intervals $I_1,I_2,\cdots,I_m$ add upto after iteration $i$ by $k_1(i),k_2(i),\cdots,k_m(i)$ respectively, where for each $1 \leq j \leq m$, $k_j \leq k$. 
%This ensures that after each iteration all equations in Equation \ref{eqn:ehs3} are satisfied, and in particular they add up to $k$ (that is, an equation corresponding to interval $I$ adds up to $\min\{|I|, k\}$). Therefore, eventually in equation corresponding to interval $I$, there will be $\min\{|I|, k\}$ variables with value $1$ and all others are $0$. Further, it follows from Lemma \ref{lem:kfeasibleAfterRounding} that the inequalities in Equation \ref{eqn:ehs4} also hold after the rounding steps. It follows that the resulting values are indeed a solution of the given LP and will be integral in at most $\mu(H)$ iterations, where $\mu(H)$ is the number of nodes in $\hat{G}$.
\qed
\end{proof}

We next show in Lemma \ref{lem:kfeasibleAfterRounding} that $X_{optI}$, which is the integer solution returned by \texttt{Rounding}, is feasible for the instance $\mathcal{B}$ for the value $q_{min}$.  That is, the values of variables in the equality corresponding to interval $I$ in Equation \ref{eqn:ehs3} add upto $\min\{k,|I|\}$ and those in each inequality in Equation \ref{eqn:ehs4} add up to at most the same value as it was adding up to in $X_{opt}$.  
%We show that the solution returned after every iteration is feasible for $\mathcal{B}$ for the value $q_{min}$. 
%In the proof of Lemma \ref{lem:kfeasibleAfterRounding} below, we use $r$ to denote $r(I_i)$.
%, where $I_i$ is the longest interval with the smallest left endpoint in iteration $i$. 
%Similarly, denote the point to the immediate left of $r$ on the number line by $r-1$. 
%\textcolor{red}{Is the LP feasible when the variables have negative values? Does it matter since we are proving the feasibility only after each variable is either 0 or 1?}
\begin{lemma} 
\label{lem:kfeasibleAfterRounding}
Let $X_{opt}$ be a fractional feasible solution returned by $\mathtt{SPAlg}(\mathcal{B}, q_{min})$. 
The solution $X_{optI}$ returned by Algorithm \texttt{Rounding} is a feasible solution for the LP instance $\mathcal{B}$ for the value $q_{min}$.

\end{lemma}
\begin{proof} 
%The proof of correctness is by induction on the iteration number. 
% New proof begins
First, we prove by induction on the iteration number that Equations \ref{eqn:ehs3} and \ref{eqn:ehs4} are satisfied at the beginning of each iteration. 
%Observe that Equation \ref{eqn:ehs5} may not be satisfied because for some $I,u$, the value of $x_{I,u}$  may exceed $1$. However, 
%At the end of the proof, we show that Equation \ref{eqn:ehs5} is also satisfied. 
The base case is for the first iteration, where we know that $X_{opt}(1)$ is same as  $X_{opt}$  which is feasible for $\mathcal{B}$ for the value $q_{min}$. 
Let us assume that for an integer $i \geq 1$, Equation \ref{eqn:ehs3} and Equation \ref{eqn:ehs4} are satisified for iteration $i-1$. We show that these constraints are satisified for iteration $i$ also.
% New proof ends

Observe that $x_{I_i,r}$ is not removed from $X_{opt}(i)$ even though the point $r(I_i)$ is removed in iteration $i$ thus reducing the length of $I_i$ by one. Effectively, removing the right endpoint of interval $I_i$ does not affect the variables in $X_{opt}(i)$. 
%From the description of the \texttt{Rounding}, the rounding is such that if variable $x_{I,r}$ is reduced by a certain value, then variable $x_{I,r-1}$ is increased by the exact same value. 
Since Equations \ref{eqn:ehs3}, \ref{eqn:ehs4} are satisfied in $X_{opt}(i-1)$ and by rounding, the variable $x_{I,r-1}$ is increased by the exact same value as that which is reduced from variable $x_{I,r}$, it follows that after iteration $i$, all equations in Equation \ref{eqn:ehs3} are satisfied by $X_{opt}(i)$. In particular, the values of variables corresponding to interval $I$ add up to $\min\{|I|, k\}$. Eventually, in equation corresponding to interval $I$, there will be $\min\{|I|, k\}$ variables with value $1$ and all others with value $0$.
%%then the equation corresponding to $I_i$ will not add up to $k$. Denote the values to which the equations corresponding to intervals $I_1,I_2,\cdots,I_m$ add upto after iteration $i$ by $k_1(i),k_2(i),\cdots,k_m(i)$ respectively, where for each $1 \leq j \leq m$, $k_j \leq k$. 
%  Further, it follows from Lemma \ref{lem:kfeasibleAfterRounding} that the inequalities in Equation \ref{eqn:ehs4} also hold after the rounding steps. It follows that the resulting values are indeed a solution of the given LP and will be integral in at most $\mu(H)$ iterations, where $\mu(H)$ is the number of nodes in $\hat{G}$.

Now, we show that the inequalities in Equation \ref{eqn:ehs4} corresponding to the maximal cliques are also satisfied by $X_{opt}(i)$.  Let $I'$ be an interval that contains the point $r-1$ such that $x_{I',r-1}$ has increased due to steps \ref{algLine:roundingBegin} to \ref{algLine:roundingEnd} in Algorithm \ref{algo:kRounding}. By the choice of $I'$ for which $x_{I',r-1}$ is increased, it follows that $x_{I',r}$ is reduced and thus $I'$ contains the point $r$. 
%otherwise $x_{I',r-1}$ would not have changed after step \ref{algLine:rounding1}. 
It follows from the definition  of the edge set  $E_{colour}$ that there is an edge between $(I',r-1)$ and $(I_i,r)$ in $\hat{G}$. 

Let $Q$ be a maximal clique that contains the node $(I',r-1)$. By Observation \ref{obs:NodesOfaVertexAreIndep}, all nodes with the same vertex coordinate form an independent set. Hence $Q$ does not contain any node of the form $(I'',r-1)$, where $I'' \neq I'$. Therefore, there is at most one node in $Q$ whose value increases. If $Q$ contains the node $(I',r)$, then $x_{I',r}$ has reduced and hence the inequality corresponding to $Q$ is satisfied under $X_{opt}(i)$. 
If $Q$ does not contain the node $(I',r)$ we now show that it must contain a node whose vertex coordinate is $r$.  To prove this, among all nodes in $Q$, consider two nodes - one for which the vertex coordinate is leftmost and another for which the vertex coordinate is the rightmost on the line. We denote the leftmost coordinate by $\lambda$ and the rightmost coordinate by $\rho$. Let $(J,\lambda)$ and $(J',\rho)$ be  two nodes in $Q$. % with vertex coordinates $\lambda$ and $\rho$ respectively.

First, we show that $\lambda \geq l(I_i)$. That is, the point $\lambda$ lies at or to the right of point $l(I_i)$ on the line. The proof is by contradiction. Suppose $\lambda < l(I_i)$. Due to the edge between nodes $(J,\lambda)$ and $(I',r-1)$ in $Q$, it is clear that either $J$ or $I'$ contains both $\lambda$ and $r-1$. Without loss of generality, assume that $J$ contains both $\lambda$ and $r-1$. Since by our assumption $\lambda < l(I_i)$, it follows that $J$ is at least as long as $I_i$ and $l(J) < l(I_i)$. This is a contradiction to our choice of $I_i$ being the longest interval with the smallest left endpoint. It follows that $\lambda \geq l(I_i)$. 
We show using the following cases that the inequality corresponding to $Q$ is still feasible.
\begin{enumerate} 
\item Case $\rho < r-1$. We show that this case is not possible. Since $(I',r-1)$ belongs to $Q$, and $\rho$ is the rightmost vertex coordinate among all nodes in $Q$, it follows that $\rho \geq r-1$. 
\item Case $\rho = r-1$. 
%By our premise, the nodes $(I',r-1)$ and $(J',\rho)$ are in $Q$. But, we have already shown that $Q$ does not contain any node of the form $(I'',r-1)$, where $I'' \neq I'$. Therefore, $\rho = r-1$ is not possible unless $I'$ and $J'$ are the same interval. 
Since $\lambda \geq l(I_i)$ and $\rho = r-1$, it follows that all points from $\lambda$ to $\rho$ belong to $I_i$.  Therefore, by the definition of the edges of $\hat{G}$, $(I_i, r)$ is adjacent to all the nodes of $Q$.  This contradicts the premise that $Q$ is a maximal clique that does not contain $(I_i,r)$. Therefore $\rho = r-1$ is not possible.  
%We have already observed that $(I',r-1)$ and $(I_i,r)$ are adjacent in $\hat{G}$. Hence if there is a maximal clique $Q$ that contains node $(I',r-1)$ such that the rightmost vertex coordinate among all nodes of $Q$ is inside $I_i$, then the node $(I_i,r)$ also belongs to $Q$. Thus the inequality corresponding to $Q$ is still feasible. 
%In this case, $\rho$ belongs to $I_i$. 
%Since $\lambda \geq l(I_i)$, and $\rho$ belongs to $I_i$, it follows that $\lambda$ belongs to $I_i$. $I_i$ contains $\lambda$ and $r$ and hence by construction of $E_{colour}$, there is an edge from $(J,\lambda)$ to $(I_i,r)$. Similarly, $(J',\rho),(I_i,r)$ are adjacent. We have already observed that $(I',r-1)$ and $(I_i,r)$ are adjacent in $\hat{G}$. Hence if there is a maximal clique $Q$ that contains node $(I',r-1)$ such that the rightmost vertex coordinate among all nodes of $Q$ is inside $I_i$, then the node $(I_i,r)$ also belongs to $Q$. Thus the inequality corresponding to $Q$ is still feasible.
\item Case $\rho = r$. Since $(J',\rho)$ (which is the same as $(J',r)$) belongs to $Q$, it follows that the inequality corresponding to $Q$ is still feasible.  The decrease in $x_{J',r}$ is exactly the same as the increase in $x_{I_i,r-1}$.  
% again $\lambda$ and $\rho$ belong to $I_i$. Therefore the vertex coordinates of all nodes inside $Q$ are contained in $I_i$.  Observe that since $(I',r-1)$ belongs to $Q$, there will not be any other node in $Q$ whose vertex coordinate is $r-1$. So, the edges $((J,\lambda),(I,r))$ and $((J',\rho),(I,r))$. In this case, either $(I',r)$ or $(I,r)$ belongs to $Q$.
\item Case $\rho > r$. Observe that there is an edge between nodes $(J,\lambda)$ and $(J',\rho)$ since they are both in $Q$. It follows that either $J$ or $J'$ or both contain $\lambda$ and $\rho$. Without loss of generality, let $J$ be this interval. Since $J$ contains all the points on the line from $\lambda$ to $\rho$, both included, it follows that the interval $J$ contains both points $r$ and $r-1$. Further, by the definition of the graph $\hat{G}$, it follows that $(J,r)$ is adjacent to all the nodes in $Q$ whose vertex coordinates which are different from $r$ and lie between $\lambda$ and $\rho$, both included.  Also, since there can be at most one node in a maximal clique with any particular vertex coordinate, and since $Q$ is a maximal clique, it follows that  either $(J,r)$ belongs to $Q$ or that $Q$ contains a node $(J'',r)$ where $J \neq J''$. Since $x_{I_i,r}$ is reduced in iteration $i$, follows that $x_{J,r}$ and $x_{J'',r}$ are also reduced. Therefore, in the maximal clique $Q$ the increase in $x_{I',r-1}$ is compensated by a decrease in $x_{J,r}$ or $x_{J'',r}$ whichever is present in $Q$. Hence, the inequality corresponding to $Q$ is satisfied in by $X_{opt}(i)$.  
\end{enumerate}
This completes the proof of the induction hypothesis that $X_{opt}(i)$ satisfies Equation \ref{eqn:ehs3} and Equation \ref{eqn:ehs4}.  Therefore, on termination of \texttt{Rounding}, $X_{optI}$ satisfies Equation \ref{eqn:ehs3} and Equation \ref{eqn:ehs4}.
Further, by Lemma \ref{lem:kRoundResultsIntegerSoln}, it is clear that \texttt{Rounding} returns an integer point starting with $X_{opt}$ in a polynomial number of steps. That is, for each $I,u$, $x_{I,u}$ is either a $0$ or a $1$ at the end of \texttt{Rounding}. 
Therefore, Equation \ref{eqn:ehs5} is also satisfied by $X_{optI}$. Hence the lemma.
\qed
\end{proof}

Finally, we prove the main result in this paper. 
%The flowchart in Figure \ref{fig:flowchart2} illustrates the algorithm for the $k$-SCF colouring problem for interval hypergraphs. 
\begin{theorem} \label{thm:kCFCIntHypPolyTime}
The $k$-SCF colouring problem in interval hypergraphs can be solved in polynomial time.

\end{theorem}
%\TheoremCFCIntHypPolyTime*
\begin{proof}
By Lemma \ref{lem:kSPAlgPoly}, the LP returns a feasible solution in polynomial time using the separation oracle \texttt{SPMaxWtClique}. By Lemma \ref{lem:kRoundResultsIntegerSoln} and Lemma \ref{lem:kfeasibleAfterRounding}, a feasible integer solution can be obtained from the fractional feasible solution in polynomial time. Further, the $k$-representative function $t$ and thereof, the co-occurrence graph $\Gamma_{t}$ can also be obtained in polynomial time. By Theorem \ref{thm:Co-occPerf}, the co-occurrence graph $\Gamma_{t}$ is perfect. Since a proper colouring of a perfect graph can be found in polynomial time, it follows from Theorem \ref{thm:kCo-occChar} that an optimal $k$-SCF colouring of an interval hypergraph can be found in polynomial time. Hence the theorem.
\qed
\end{proof}
%We define co-occurrence graph, where each hyperedge may have multiple representatives. The representative function for this new graph is a \textit{multi-valued function} $t$ that maps $k$ representatives for each interval. Recall that a multi-valued function from domain $X$ to co-domain $Y$ associates each element in $X$ to multiple elements in $Y$. We define $t$ as follows: $t(I) = \{v_1,v_2,\cdots,v_k\}$, if and only if $(I,v_1),(I,v_2),\ldots,(I,v_k) \in S_{min}$. From $t$, we construct the co-occurrence graph $\Gamma_{t}$ as given below. The set of vertices in $\Gamma_{t}$, $V(\Gamma_{t}) = \{\bigcup\limits_{\forall I \in \mathcal{I}} t(I)\}$. For each $v \in t(I)$, an edge is added from $v$ to every other representative in $I$. That is, $(v_i,v_j)$ is an edge in the co-occurrence graph if and only if both $v_i$ and $v_j$ belong to some interval $I \in \mathcal{I}$ and either $v_i$ or $v_j$ or both belong to $t(I)$. Observe that, for every interval $I$, vertices in $t(I)$ form a clique in the co-occurrence graph. 

%We define a \textit{multi-representative} function $t$ as follows: a $k$-representative function $t$ is a \textit{multi-valued function} that maps $k$ representatives for each hyperedge. Recall that a multi-valued function from domain $X$ to co-domain $Y$ associates each element in $X$ to multiple elements in $Y$. We define $t$ as follows: $t(e) = \{v_1,v_2,\cdots,v_k\}$, if and only if each vertex in the set $\{v_1,v_2,\cdots,v_k\}$ is a representative of hyperedge $e$ .

% !TEX root = CFCIntervalsDhannyaMain.tex
\section{Partition into Exactly Hittable Sets and 1-SCF Colouring Number} \label{sec:cfeqehs}
Given a hypergraph $H = (\mathcal{V},\mathcal{E})$, a set $S \subseteq \mathcal{V}$ is an exact hitting set of $H$, if for each $E \in \mathcal{E}$, we have $|S \cap E| = 1$. 
If hypergraph $H$ has an exact hitting set, then we refer to $H$ as an \textit{exactly hittable hypergraph}.
In this section, we show that each interval hypergraph can be partitioned into $1$-SCF colouring number of exactly hittable interval hypergraphs.
This is proved using Lemmas \ref{lem:CFimpliesEHS}, \ref{lem:kPartskClique} and \ref{lem:kClique}. We first show in Lemma \ref{lem:CFimpliesEHS} that for an arbitrary hypergraph, a $1$-SCF colouring with $\ell$ colours gives a partition of the hyperedges into $\ell$ exactly hittable hypergraphs. Then, we prove the other direction for interval hypergraphs: interval hypergraphs that can be partitioned into $\ell$ exactly hittable hypergraphs can be $1$-SCF coloured with $\ell$ colours. \\
In this section, the range of the  $1$-representative function $t$ is the set $\mathcal{V}$. Thus for an edge $E$, $t(E)$ is a vertex. 
\begin{lemma} \label{lem:CFimpliesEHS}
If there exists a $1$-SCF colouring of a hypergraph $H = (\mathcal{V},\mathcal{I})$ with $\ell$ non-zero colours, then there exists a partition of $\mathcal{I}$ into $\ell$ parts $\{\mathcal{I}_1, \ldots, \mathcal{I}_{\ell}\}$ such that each $H_i = (\mathcal{V}, \mathcal{I}_i), 1 \leq i \leq \ell$ is an exactly hittable hypergraph.  
\end{lemma}
\begin{proof}
%Let $C: \mathcal{V} \rightarrow \{0 \ldots k\} $ be a $1$-SCF colouring of $H$ that results in colour classes $\{V_0, V_1,\ldots V_{\ell}\}$. Each $V_i$ consists of points that get colour $i$ under $C$. 
Given a $1$-SCF colouring $C$ with at most $\ell$ non-zero colours, let $t$ be a representative function $t:\mathcal{I} \rightarrow \mathcal{V}$ such that for each $I \in \mathcal{I}$, $I$ is $1$-SCF coloured by the vertex  $t(I)$.  The hyperedges are partitioned into sets $\{\mathcal{I}_1, \mathcal{I}_2, \ldots \mathcal{I}_{\ell}\}$ based on $t$ and the vertex colouring as follows:  the set $\mathcal{I}_i$ consists of all those hyperedges $I \in \mathcal{I}$ such that the colour of  $t(I)$ is $i$.  We show that  for each $1 \leq i \leq \ell$, $H_i=(\mathcal{V},\mathcal{I}_i)$ is an exactly hittable hypergraph and the exact hitting set is $h_i = \{t(I) \mid I \in \mathcal{I}_i\}$.   
$h_i$ is a hitting set of $\mathcal{I}_i$ because for each $I \in \mathcal{I}_i$, $t(I)$ is in $h_i$. Since all the vertices of $h_i$ have the same colour assigned by $C$, it follows that each $I \in \mathcal{I}_i$ is hit exactly once by $h_i$. Thus, $h_i$ is an exact hitting set of $\mathcal{I}_i$.  Therefore, each $H_i = (\mathcal{V},\mathcal{I}_i)$ is an exactly hittable hypergraph. This proves the  lemma.
\qed
\end{proof}
We next set up the machinery to conclude that if we are given a partition of an interval hypergraph $H$ into $\ell$ exactly hittable interval hypergraphs, then we get a $1$-SCF colouring with at most $\ell$ non-zero colours.
Let $P = \{\mathcal{I}_1, \mathcal{I}_2, \ldots, \mathcal{I}_{\ell}\}$ be a partition of intervals in $\mathcal{I}(H)$, such that each $H_i = (\mathcal{V},\mathcal{I}_i), 1 \leq i \leq \ell$  is an exactly hittable interval hypergraph. We show that there is a $1$-SCF colouring of $H$ with $\ell$ non-zero colours. Let $h_1, \ldots, h_{\ell}$ be the exact hitting sets of the parts $\mathcal{I}_1, \ldots, \mathcal{I}_{\ell}$ respectively. 
%Let $R$ denote the set $\cup_{i=1}^{\ell}  h_i$. 
For each interval $I \in \mathcal{I}_i$, let  $t(I)$ be the only vertex in $I \cap h_i$.  
%where $h_i$ is the exact hitting set of the part $S_i$ to which interval $I$ belongs to. 
Let $\Gamma_t$ be the co-occurrence graph of $H$.  
%In the arguments below, the graph $\Gamma_t$ and the representative function $t$ are as defined in Section \ref{subsec:Co-OccConflict}.  
We now state and prove Lemmas \ref{lem:kPartskClique} and \ref{lem:kClique} and use them in the proof of Theorem \ref{thm:EHS-CF}.  
\begin{lemma} \label{lem:kPartskClique}
Let $Q$ = $\{u_1, \ldots, u_q\}$ be a clique of size $q$ in the co-occurrence graph $\Gamma_t$. Then, there are $q$ distinct parts $s_1, \ldots, s_q$ in the set $\mathcal{P}$ containing intervals $I_1, \ldots, I_q$ respectively, satisfying the following property: for each $u_i$ in $Q$, $u_i$ is the representative of interval $I_i$ and for each edge $u_iu_j$ in $Q$ either $u_j$ is in $I_i$ or $u_i$ is in $I_j$.
\end{lemma}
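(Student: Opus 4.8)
The plan is to argue that distinct representatives in a clique must come from distinct parts of $\mathcal{P}$, and then to unpack the edge condition from the definition of the co-occurrence graph. First I would handle the vertex-to-part assignment. Each $u_i \in Q$ lies in $R = \bigcup_{i=1}^k h_i$, so $u_i$ belongs to at least one exact hitting set $h_{j}$; moreover, since $u_i = t(I)$ for some interval $I$, and $t(I)$ is defined as $I \cap h_j$ where $h_j$ is the exact hitting set of the part $S_j$ containing $I$, we may pick for each $u_i$ a witnessing interval $I_i$ with $t(I_i) = u_i$ and let $s_i$ be the part of $\mathcal{P}$ that contains $I_i$. The key claim is that the parts $s_1, \dots, s_q$ are pairwise distinct. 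Suppose $s_i = s_j = S$ for $i \neq j$, with exact hitting set $h$. Since $\{u_i, u_j\}$ is an edge of $G_{R,t}$, there is a single interval $E$ with $u_i, u_j \in E$ (here $E = I_i$ or $E = I_j$, since an edge of the co-occurrence graph is witnessed by an interval containing both endpoints, one of which is its representative). That interval $E$ lies in some part of $\mathcal{P}$; if that part is $S$, then $|E \cap h| \ge 2$, contradicting that $h$ is an \emph{exact} hitting set of $S$. The subtlety, and the step I expect to need the most care, is that the witnessing interval $E$ need not itself be one of $I_i, I_j$ in general — but in the co-occurrence graph an edge $(u_i,u_j)$ requires $t(E) \in \{u_i,u_j\}$ for the witnessing $E$, so $E$ is precisely the witnessing interval of $u_i$ or of $u_j$, and by our choice of $I_i, I_j$ we may take $E \in \{I_i, I_j\} \subseteq S$, yielding the contradiction cleanly.

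Next I would address the edge condition. For each edge $(u_i, u_j)$ of $Q$, by the definition of $G_{R,t}$ there exists a hyperedge (interval) $E$ with $u_i, u_j \in E$ and $t(E) \in \{u_i, u_j\}$. If $t(E) = u_i$, then $E$ is a witnessing interval for $u_i$; replacing our earlier arbitrary choice of $I_i$ by this $E$ if necessary — or, more carefully, noting that \emph{any} interval with representative $u_i$ lies in the unique part whose exact hitting set contains $u_i$, so the part $s_i$ is well-defined independently of which witness we pick — we get $u_j \in I_i$. Symmetrically, if $t(E) = u_j$ then $u_i \in I_j$. This is exactly the stated dichotomy "$u_j \in I_i$ or $u_i \in I_j$."

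The one bookkeeping issue to resolve is consistency: a single vertex $u_i$ may be the representative of several intervals across several edges of $Q$, so I should first fix, for each $i$, the part $s_i$ as the unique part of $\mathcal{P}$ whose exact hitting set contains $u_i$ (uniqueness because the hitting sets $h_1, \dots, h_k$ partition-cover $R$, and if $u_i \in h_j \cap h_{j'}$ with $j \ne j'$ one can still take $I_i \in S_j$ with $t(I_i) = u_i$, so it suffices to choose one such $j$), and then for each edge of $Q$ invoke the co-occurrence-graph definition to produce the witnessing interval, which necessarily lies in $s_i$ or in $s_j$. Since the witnesses all live in the fixed parts $s_1, \dots, s_q$, which we have shown to be distinct, choosing $I_i$ to be any interval of $s_i$ with representative $u_i$ (e.g., one of the edge-witnesses) completes the construction. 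The main obstacle is precisely this reconciliation between the per-edge witnesses and a single global choice of $(s_i, I_i)$ per vertex; once the distinctness-of-parts claim is in hand via the exact-hitting-set argument, the rest is routine.
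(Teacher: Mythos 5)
There is a genuine gap, and it sits exactly where you flagged ``the main obstacle'': reconciling the per-edge witnesses with a single global choice of $(s_i, I_i)$ per vertex. Your key claim --- that for an arbitrary choice of witnesses $I_i$ with $t(I_i)=u_i$ the parts $s_1,\dots,s_q$ are pairwise distinct --- is false, and both facts you invoke to repair it are also false. A point of $R$ can lie in the exact hitting sets of several parts, and distinct intervals with the same representative can lie in distinct parts, so ``the unique part whose exact hitting set contains $u_i$'' is not well defined. Concretely: take points $1,2,3,4$, intervals $I_1=\{1,2\}$, $I_2=\{3,4\}$, $E=\{1,2,3,4\}$, and the partition $S=\{I_1,I_2\}$ with exact hitting set $\{2,3\}$ and $S_E=\{E\}$ with exact hitting set $\{2\}$. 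Then $t(I_1)=t(E)=2$, $t(I_2)=3$, and the edge $(2,3)$ is present (witnessed by $E$), yet choosing $I_1$ and $I_2$ as witnesses places both clique vertices in the same part $S$; the contradiction $|E\cap h|\ge 2$ never fires because $E$ lies in $S_E$, not in $S$. The edge witness only lies in \emph{some} part whose hitting set contains $t(E)$, not necessarily in whichever part you fixed as $s_i$ or $s_j$. For a single edge you can indeed retroactively take $I_i$ to be that edge's witness, but a vertex of a clique with $q\ge 3$ has several incident edges whose witnesses may be different intervals living in different parts, so no single choice of $I_i$ serves all of them; this also leaves the required property ``$u_j\in I_i$ or $u_i\in I_j$'' unestablished for the edges whose witness you did not pick.

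The missing idea is the interval structure of the hyperedges, which is what makes a consistent global choice possible (this is the ``About Cliques in a Co-occurrence graph'' observation from Section 3). Order the clique $u_1<\dots<u_q$ left to right; the witness $I$ of the extremal edge $(u_1,u_q)$ is an interval containing $u_1$ and $u_q$ and therefore contains \emph{all} of $u_1,\dots,u_q$. The paper inducts on $q$: apply the hypothesis to $u_2,\dots,u_q$ to obtain distinct parts $s_2,\dots,s_q$ with intervals $I_2,\dots,I_q$, and then argue that $I$ (say with $t(I)=u_1$) cannot lie in any $s_j$, since then $h_{s_j}$ would contain both $u_1=t(I)$ and $u_j$, and both points lie in $I$, contradicting exactness. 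That one interval simultaneously separates $u_1$'s part from every other part and certifies $u_j\in I_1$ for every edge $(u_1,u_j)$. Your pairwise argument cannot be assembled into this conclusion without that containment fact.
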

\begin{proof}
The proof is by induction on the size $q$ of the clique. The claim is true for base case when $q = 1$; then $u_1$ is the representative of some interval $I_1$ in some part $s_1$. Assume that the claim is true for any clique of size $q-1$. Now, we show that the claim is true for clique $Q$ of size $q$. Let $u_1 < \ldots <u_q$ be the left to right ordering of the points (on the line) corresponding to vertices in the clique $Q$. Since $u_1u_q$ is an edge in $Q$, there must exist an interval $I$ such that either $u_1$ or $u_q$ is the representative of $I$ and $u_q$ occurs along with $u_1$ inside $I$. Without loss of generality, assume that $u_1$ is the representative of interval $I$. Observe that $I$ must contain all points in $u_1, \ldots, u_q$. By the induction hypothesis for the points $u_2, \ldots, u_q$, there are parts $s_2, \ldots, s_q$ and intervals $I_2,\ldots,I_q$ such that $u_i$ is representative of $I_i$ and for each edge $u_iu_j$ in the clique on points in $u_2, \ldots, u_q$, either $u_j$ is in $I_i$ or $u_i$ is in $I_j$. We now show that $I$ does not belong to the parts $s_2, \ldots, s_q$ and that it belongs to a different part. Assume for contradiction that $I$ belongs to some part $s_j$ in $\{s_2, \ldots, s_q\}$. Then, the exact hitting set of set $s_j$ contains at least one point $u_j \in Q$ that is distinct from $u_1$. $u_j$ and $u_1$ cannot be the same point because there is an interval $I_j$ in $s_j$ whose representative is $u_j$. Observe that $u_1$ which is the representative of $I$ must also be in the exact hitting set of $s_j$ because according to our assumption, $I$ belongs to $s_j$. Since interval $I$ contains all points in $u_1, \ldots, u_q$, it is hit at least twice by the exact hitting set of $s_j$ which is a contradiction.
\qed
\end{proof}

\begin{lemma} \label{lem:kClique}
The clique number of the co-occurrence graph $\Gamma_t$ is at most $\ell$.
\end{lemma}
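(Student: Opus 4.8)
The plan is to derive the bound on the clique number directly from Lemma \ref{lem:kPartskClique}. Suppose, for contradiction, that $G_{R,t}$ contains a clique $Q = \{u_1, \ldots, u_q\}$ with $q \geq k+1$. By Lemma \ref{lem:kPartskClique}, there are $q$ \emph{distinct} parts $s_1, \ldots, s_q$ in the partition $\mathcal{P} = \{S_1, \ldots, S_k\}$, one associated with each vertex $u_i$ of the clique (via an interval $I_i$ for which $u_i$ is the representative). But $\mathcal{P}$ has only $k$ parts, so having $q \geq k+1$ pairwise distinct parts among $s_1, \ldots, s_q$ is impossible by the pigeonhole principle. This contradiction shows $q \leq k$, i.e. $\omega(G_{R,t}) \leq k$.

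The only real content is making sure the quantitative statement ``there are $q$ \emph{distinct} parts'' is exactly what Lemma \ref{lem:kPartskClique} delivers, so I would first restate that consequence cleanly: the map $u_i \mapsto s_i$ sending each clique vertex to the part guaranteed by Lemma \ref{lem:kPartskClique} is injective, because the lemma asserts the $s_i$ are distinct. Hence $q = |\{u_1,\ldots,u_q\}| \leq |\{s_1,\ldots,s_q\}| \leq |\mathcal{P}| = k$. Since this holds for every clique $Q$, we get $\omega(G_{R,t}) \leq k$.

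There is essentially no obstacle here; the work was already done in Lemma \ref{lem:kPartskClique}. The one point worth a sentence of care is why the construction of $t$ (setting $t(I) = I \cap h_i$ for the part $S_i$ containing $I$) is well-defined and makes each $u_i$ genuinely the representative of an interval $I_i$ lying in a \emph{specific} part: since $\mathcal{P}$ is a partition, every interval $I$ lies in exactly one $S_i$, so $t(I)$ is unambiguous, and $|I \cap h_i| = 1$ because $h_i$ is an \emph{exact} hitting set of $S_i$; this is what licenses applying Lemma \ref{lem:kPartskClique} in the first place. With that in hand the lemma follows immediately.
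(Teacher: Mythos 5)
Your proposal is correct and follows essentially the same argument as the paper: both invoke Lemma \ref{lem:kPartskClique} to extract $q$ distinct parts from a clique of size $q$ and then contradict the fact that $\mathcal{P}$ has only $k$ parts. Your added remark on why $t$ is well-defined is a harmless (and reasonable) bit of extra care, but the core reasoning is identical.
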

\begin{proof}
For a clique of $q$ vertices in $\Gamma_t$, we know from  Lemma \ref{lem:kPartskClique} that there must be $q$ distinct exactly hittable parts $s_1, \ldots s_{q}$ and $q$ intervals $I_1, \ldots I_{q}$ in each part, respectively,  satisfying an additional property which is not important for this argument.  
%with the following property: for each $u_i$ in $\ell$, $u_i$ is the representative of interval $I_i$ and for each edge $(u_i,u_j)$ either $u_j$ is in $I_i$ or $u_i$ is in $I_j$.  
Therefore, the size of the largest clique in $\Gamma_t$ is at most the number of parts which is at most $\ell$. 
\qed
\end{proof}
%However, according to the hypothesis, there are only $\ell$ parts $S_1, S_2 \ldots S_{\ell}$. Hence, our premise that there exists a clique of size more than $\ell$ is wrong. Thus we have arrived at a contradiction and the clique number of $\Gamma_t$ is at most $\ell$.

\noindent
Now, we present Theorem \ref{thm:EHS-CF}.
\begin{theorem}\label{thm:EHS-CF}
For an interval hypergraph $H = (\mathcal{V},\mathcal{I})$, there exists a partition of $\mathcal{I}$ into $\ell$ parts $\{\mathcal{I}_1, \ldots, \mathcal{I}_{\ell}\}$ such that for each $1 \leq i \leq \ell$, $H_i = (\mathcal{V}, \mathcal{I}_i)$ has an exact hitting set if and only if there exists a $1$-SCF colouring of $H$ with $\ell$ non-zero colours.
\end{theorem}

%\TheoremEHSCF*
\begin{proof}
From  Lemma \ref{lem:CFimpliesEHS}, it follows that if there is a $1$-SCF colouring of a hypergraph $H$ with at most $\ell$ non-zero colours, then there is a partition of $H$ into at most $\ell$ exactly hittable hypergraphs.  To prove the other direction, given a partition of interval hypergraph $H$ into $\ell$ exactly hittable interval hypergraphs, we consider $\Gamma_t$ as defined before Lemma \ref{lem:kPartskClique}.   From Lemma \ref{lem:kClique}, the clique number of $\Gamma_t$ is at most $\ell$.  From Theorem \ref{thm:Co-occPerf}, we know that $\Gamma_t$ is a perfect graph. By the Perfect Graph Theorem \cite{Gol2004}, $\chi(\Gamma_t) = \omega(\Gamma_t) \leq \ell$.  Further from Theorem \ref{thm:kCo-occChar}, $\chi_{cf}(H) \leq \chi(\Gamma_t) \leq \ell$.
 Thus, if there exists a partition of interval hypergraph $H$ into $\ell$ exactly hittable interval hypergraphs, then there exists a $1$-SCF colouring of $H$ using at most $\ell$ non-zero colours.  Hence Theorem \ref{thm:EHS-CF} is proved.
 \qed
 \end{proof}

\bibliographystyle{plain}
\bibliography{cfc}
\end{document}